\newcommand{\la}{\langle}
\newcommand{\ra}{\rangle}
\renewcommand{\Re}{\operatorname{Re}}
\renewcommand{\Im}{\operatorname{Im}}
\newcommand{\supp}{\operatorname{supp}}
\newtheorem{theorem}{Theorem}
\newtheorem{proposition}[theorem]{Proposition}
\newtheorem{lemma}[theorem]{Lemma}
\newtheorem{corollary}[theorem]{Corollary}
\theoremstyle{remark}
\newtheorem{remark}[theorem]{Remark}
\numberwithin{equation}{section}
\numberwithin{theorem}{section}
\numberwithin{table}{section}
\numberwithin{figure}{section}
\title[GWP below Energy Norm]{Global Well-posedness of NLS with a Rough Potential below the Energy Norm}
\date{\today}
\author{Younghun Hong}
\address{Brown University}
\begin{document}

\maketitle

\begin{abstract}
We show that a 3d cubic defocusing nonlinear Schr\"odinger equation with a potential is globally well-posed in $H^s$ for $s>\frac{5}{6}$, provided that a potential is contained in $B\cap L^\infty$ and its negative part has small global Kato norm, where
$$B=\Big\{V: \sum_{k=-\infty}^\infty2^{k/2}\|V\|_{L^2(2^k\leq|x|<2^{k+1})}<\infty\Big\}.$$
The proof is based on the approach of Colliander-Keel-Staffilani-Takaoka-Tao \cite{CKSTT1}, called the \textit{$I$-method}, but in order to deal with a rough potential, we modify harmonic analysis tools by spectral theory.
\end{abstract}

\section{Introduction}

In this paper, we consider a 3d cubic defocusing nonlinear Schr\"odinger equation
\begin{equation}\tag{$\textup{NLS}_V$}
iu_t+\Delta u-Vu-|u|^2u=0;\ u(0)=u_0\in H^s,
\end{equation}
where $u=u(t,x)$ is complex-valued, $(t,x)\in\mathbb{R}\times\mathbb{R}^3$ and $V=V(x)$ is a real-valued potential. It is known that for a large class of short-range potentials, $\textup{NLS}_V$ is locally(-in-time) well-posed in $H^s$ for $s\in(\frac{1}{2},1]$ (see \cite{H2}). Such solutions satisfy the mass conservation law 
$$M[u(t)]=\int_{\mathbb{R}^3}|u(t,x)|^2dx=M[u_0],$$
and $H^1$ solutions satisfy the energy conservation law
$$E[u(t)]=\frac{1}{2}\int_{\mathbb{R}^3}|\nabla u(t,x)|^2+V(x)|u(t,x)|^2 dx+\frac{1}{4}\int_{\mathbb{R}^3}|u(t,x)|^4 dx=E[u_0]$$
during their existence time. The uniform bound from these conservation laws then yields global(-in-time) well-posedness in $H^1$.

Our goal is to lower the required regularity for global well-posedness to that for local well-posedness $(s>\frac{1}{2})$. At the same time, we also aim to include as large a potential class as possible. Indeed, the largest potential class we may hope for is $\mathcal{K}_0$, that is, the norm closure of bounded and compactly supported functions with respect to the global Kato norm
$$\|V\|_{\mathcal{K}}:=\sup_{x\in\mathbb{R}^3} \int_{\mathbb{R}^3}\frac{|V(y)|}{|x-y|}dy,$$
since it is the largest class for which Strichartz estimates are known \cite{BG, GVV}.

In the homogeneous case $V=0$, Colliander-Keel-Staffilani-Takaoka-Tao established global well-posedness in $H^s$ for $s>\frac{5}{6}$, using the \textit{$I$-method} \cite{CKSTT1}, which improves Bourgain's \textit{Fourier truncation method} \cite{Bo}: they showed that the energy of the smoothed solution $Iu$, controlling the $H^s$ norm, grows at most polynomially in time. Later, in \cite{CKSTT2}, utilizing the \textit{interaction Morawetz estimate}, the same authors pushed down the required regularity to $s>\frac{4}{5}$. Currently, this is the best known result for the homogeneous equation. However, as far as the author knows, nothing has been proved for inhomogeneous ones.

In this paper, we first show global well-posedness below the energy norm in the presence of a potential. We define the potential class $B$ by 
$$B:=\Big\{V: \sum_{k=-\infty}^\infty2^{k/2}\|V\|_{L^2(2^k\leq|x|<2^{k+1})}<\infty\Big\},$$
and denote the negative part of the potential $V$ by $V_-(x):=\min(V(x),0)$. Then, we prove:
\begin{theorem}[Global well-posedness]\label{thm:MainTheorem}
If $V\in B\cap L^\infty$ and $\|V_-\|_{\mathcal{K}}<4\pi$, then $\textup{NLS}_V$ is globally well-posed in $H^s$ for $s>\frac{5}{6}$.
\end{theorem}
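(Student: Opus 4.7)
The plan is to adapt the $I$-method of \cite{CKSTT1}, replacing the Fourier multipliers by spectral multipliers of $H:=-\Delta+V$. Since $V\in B\cap L^\infty$ and $\|V_-\|_{\mathcal{K}}<4\pi$, the operator $H$ is a self-adjoint nonnegative operator on $L^2(\mathbb{R}^3)$ with no zero eigenvalue or resonance; hence $\sqrt{H}$ is well defined and the spaces $\langle H\rangle^{s/2}L^2$ coincide with the usual Sobolev spaces $H^s$ in the range $s\in(\tfrac12,1]$ of interest. Fixing $N\gg 1$, I would introduce the spectral smoothing operator $I=m_N(\sqrt{H})$, with $m_N(\lambda)=1$ for $\lambda\le N$ and $m_N(\lambda)=(\lambda/N)^{s-1}$ for $\lambda\ge 2N$, and the associated modified energy
\[
E[Iu]=\tfrac{1}{2}\int\bigl(|\nabla Iu|^2+V|Iu|^2\bigr)dx+\tfrac{1}{4}\int|Iu|^4\,dx.
\]
Spectral calculus then gives $\|Iu\|_{H^1}\lesssim N^{1-s}\|u\|_{H^s}$, so that at $t=0$ the modified energy is of size $O(N^{2(1-s)})$, and conversely $E[Iu(t)]$ controls $\|u(t)\|_{H^s}^2$ up to the same factor.

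The second step is a local theory for $Iu$ whose lifespan depends only on $\|Iu(t_0)\|_{H^1}$. The Strichartz and smoothing estimates for $e^{-itH}$ required here are available for our potential class by \cite{BG,GVV} and already underlie the local theory of $\textup{NLS}_V$ in \cite{H2}; once combined with spectrally localized Littlewood--Paley projectors $P_N^H$ and the associated Bernstein and square-function inequalities adapted to $H$, they permit a standard contraction mapping for $Iu$ in a Strichartz-type space and allow one to close estimates on the commuted nonlinearity $I(|u|^2u)$.

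The crux of the argument, and the step I expect to be the main obstacle, is an almost conservation law of the form
\[
\bigl|E[Iu(t_0+\tau)]-E[Iu(t_0)]\bigr|\lesssim N^{-\alpha}\bigl(1+\|Iu(t_0)\|_{H^1}\bigr)^{\beta}
\]
on each local interval, for some $\alpha>0$ matching the CKSTT gain. Differentiating $E[Iu]$ using $\textup{NLS}_V$ produces a trilinear commutator of the schematic form $\langle I(|u|^2u)-|Iu|^2 Iu,\,H Iu\rangle$. In \cite{CKSTT1} this is handled by a sharp resonance analysis on the Fourier support of the three input frequencies, which has no direct analogue here because eigenfunctions of $H$ carry no group structure. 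I would bridge the gap in one of two ways: (i) use the wave operators $W_\pm(H,-\Delta)$, whose $L^p$ and Sobolev boundedness for $B\cap L^\infty$ with small $\|V_-\|_{\mathcal{K}}$ allows one to intertwine spectral multipliers of $H$ with Fourier multipliers and reduce the trilinear estimate, modulo $N^{-\alpha}$ errors, to the known free version of \cite{CKSTT1}; or (ii) perform the multilinear analysis directly via bilinear spectral multiplier theorems for $H$. Option (i) is cleaner but requires verifying wave-operator bounds in our class, while (ii) is more self-contained but technically heavier.

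Given such an almost conservation law, the iteration is standard. On a single local interval the energy grows by at most $N^{-\alpha}$, so after $T/\tau$ iterations $E[Iu]$ stays bounded by $CN^{2(1-s)}$ provided $T\cdot N^{-\alpha}\ll N^{2(1-s)}$. Choosing $N=N(T)$ accordingly yields $\|u(t)\|_{H^s}\le C(T)$ on $[0,T]$, hence global well-posedness. The regularity threshold $s>\tfrac{5}{6}$ drops out from exactly the same balancing as in \cite{CKSTT1}, since the trilinear gain $\alpha$ matches the one available for the free equation.
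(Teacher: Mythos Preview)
Your proposal follows essentially the same strategy as the paper: replace the Fourier $I$-operator by the spectral multiplier $\mathcal{I}=m_N(\sqrt{H})$, prove a modified local theory with control on $\|\mathcal{I}u\|_{\mathcal{X}_{1,1/2+}^\delta}$, establish an almost conservation law $E[\mathcal{I}u(\delta)]=E[\mathcal{I}u_0]+O(N^{-1+})$, and iterate. Your option~(i), invoking the wave operator to import the free estimates, is exactly what the paper does for the linear and bilinear $\mathcal{X}_{s,b}$-type inequalities (via Beceanu's structure formula and the intertwining identity $m(H)=Wm(-\Delta)W^*$).

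There is, however, one concrete mechanism you do not identify, and it is precisely where the paper's effort goes. The intertwining property transfers \emph{linear} estimates cleanly, but it does not by itself reduce the trilinear commutator $\mathcal{I}(|u|^2u)-|\mathcal{I}u|^2\mathcal{I}u$ to the free case, because $W^*$ does not respect pointwise products: conjugating a product by $W^*$ does not yield a product of conjugates. The paper's solution is a separate tool, the \emph{high frequency approximation lemmas} (its Lemmas~2.1--2.2 and Corollary~2.3), which show that for symbols supported at scale $N\gg1$ one has $\|\varphi_N(\sqrt{-\Delta})-\varphi_N(\sqrt{H})\|_{L^p\to L^p}\lesssim N^{-\alpha}$. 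These are proved from the resolvent identity and Stone's formula, not from wave-operator boundedness. With them, the paper replaces $\mathcal{I}(|u|^2u)$ by $I\bigl(|I^{-1}\mathcal{I}u|^2(I^{-1}\mathcal{I}u)\bigr)$ up to an $O(N^{-1})$ error, and only then performs the CKSTT-style dyadic analysis on the \emph{Fourier} side. Your phrase ``modulo $N^{-\alpha}$ errors'' is the right intuition, but the source of those errors is this approximation step rather than the intertwining.

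A minor second point: the paper closes the iteration by \emph{scaling} the equation so that $E[\mathcal{I}u_0]\le 1$, which forces a rescaled potential $V_r=r^{-2}V(\cdot/r)$; one must then check (as the paper does in its Remark~3.10) that every constant in the argument is invariant under $V\mapsto V_r$. Your direct iteration without scaling is viable too, but then the lifespan $\tau$ and the increment bound both depend on $\|Iu(t_0)\|_{H^1}\sim N^{1-s}$, and the bookkeeping needed to recover the threshold $s>\tfrac56$ has to be done explicitly.
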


\begin{remark}[Potential class]
$(i)$ The assumptions in Theorem 1.1 allow us to use the two main ingredients of the proof: high frequency approximation lemmas (Lemma 2.1 and 2.2) and Beceanu's structure formula for the wave operator \cite{B}. The smallness of $V_-$ guarantees coercivity of the energy $E[u]$ as well as the absence of zero resonances (Lemma A.1).\\
$(ii)$ We do not assume any differentiability of a potential. For example, a potential with the bounds $0\leq V(x)\lesssim\la x\ra^{-2-}$ satisfies the hypotheses of the theorem.
\end{remark}

\begin{remark}[Morawetz type inequalties]
The current technology of Morawetz type inequalities for $\textup{NLS}_V$ always requires smallness of the confining part of a potential, $\max(\frac{x}{|x|}\cdot\nabla V,0)$ \cite{FV, CCL}. Thus they cannot be applied to $\textup{NLS}_V$ with potentials having large wells or highly oscillatory potentials, for example. In order to include a larger class of potentials, we avoid using the interaction Morawetz estimate as in \cite{CKSTT2}.
\end{remark}

We prove Theorem 1.1 using the $I$-method, but we also make several adjustments to deal with rough potentials. When a potential is smooth and rapidly decaying, it is conventional to treat the linear term $Vu$ as a nonlinear term, and use the Duhamel formula 
\begin{equation}
u(t)=e^{it\Delta}u_0-i\int_0^te^{i(t-s)\Delta}(Vu+|u|^2u)(s)ds
\end{equation}
(see \cite[Section 4]{C}). However, if a potential is rough, the Duhamel formula $(1.1)$ does not make sense, since $Vu$ has infinite Sobolev norm even for smooth $u$. For this reason, considering a potential as a part of the the linear operator, we use the Duhamel formula
\begin{equation}
u(t)=e^{-itH}u_0-i\int_0^te^{-i(t-s)H}(|u|^2u)(s) ds.
\end{equation}
Now we observe that the $I$-operator of \cite{CKSTT1} is not applicable to $(1.2)$, because a Fourier multiplier $I$ does not commute with the linear propagator $e^{-itH}$. To solve this problem, we replace the $I$-operator by the $\mathcal{I}$-operator, namely a smoothed high \textit{spectrum} truncation defined as a spectral multiplier. We then run the program of Colliander-Keel-Staffilani-Takaoka-Tao \cite{CKSTT1} together with the following two observations. First, in high spectrum, spectral multipliers can be approximated by Fourier multipliers (see Section 2). Second, the wave operator is a convenient tool to derive linear and bilinear estimates associated with $H$ (see Section 3).

\subsection{Organization of the paper}
In Section 2 and 3, we present the key items of this paper: high frequency approximation lemmas and the wave operator. In Section 4 and 5, we prove the almost conservation law and the main theorem.

\subsection{Notations}
We fix a standard dyadic partition of unity function $\chi\in C_c^\infty(\mathbb{R})$ such that $\chi$ is supported in $[\frac{1}{2},2]$ and $\sum_{N\in 2^{\mathbb{Z}}}\chi(\tfrac{\cdot}{N})\equiv 1$ on $\mathbb{R}^+$. We define standard Littlewood-Paley projections by $\widehat{P_Nf}(\xi)=\chi_N(|\xi|)\hat{f}(\xi)$. When $V\in B$ and $\|V_-\|_{\mathcal{K}}<4\pi$, using functional calculus, we define \textit{perturbed} Littlewood-Paley projections by $\mathcal{P}_N=\chi_N(\sqrt{H})$.

\subsection{Acknowledgement} The author would like to thank his advisor, Justin Holmer, for his help and encouragement.

\section{High Frequency Approximation}

\subsection{Heuristic argument}
We begin by a heuristic argument involving the distorted Fourier transform. Let $e_V(x,\xi)$ be the solutions to the Lippmann-Schwinger equation
\begin{equation}
e_V(x,\xi)=e^{ix\cdot\xi}-\int_{\mathbb{R}^3}\frac{e^{i|\xi||x-y|}}{4\pi|x-y|} V(y) e_V(y,\xi) dy\underset{\textup{formally}}\Longleftrightarrow He_V(x,\xi)=|\xi|^2e_V(x,\xi),
\end{equation}
assuming that the potential $V$ is good enough to guarantee solvability of $(2.1)$ for all $\xi\in\mathbb{R}^3$. The collection $\{e_V(x,\xi)\}_{\xi\in\mathbb{R}^3}$ is called the \textit{distorted Fourier basis}, and the \textit{distorted Fourier transform} is defined by
$$(\mathcal{F}_Vf)(\xi):=\int_{\mathbb{R}^3} f(x)\overline{e_V(x,\xi)}dx.$$
The distorted Fourier transform is useful to analyze the linear propagator $e^{-itH}$, see \cite{I, A} for example. However, it is not a convenient tool in nonlinear analysis, mainly because the simple convolution property $\mathcal{F}_V(uv)=\mathcal{F}_Vu*\mathcal{F}_Vv$
is no longer available. Indeed, it is equivalent to the group structure of the Fourier basis  $e_V(x,\xi)e_V(y,\xi)=e_V(x+y,\xi)$ for all $x,y,\xi\in\mathbb{R}^3,$
but this is not expected to be true for general Schr\"odinger operators. 

We circumvent lack of the convolution property by considering distorted Fourier multipliers $\mathcal{F}_V^{-1}(\varphi(|\xi|)\mathcal{F}_Vf)$ with a symbol $\varphi\in C_c^\infty((0,+\infty))$. Observe that if $|\xi|$ is large, because of high oscillation, the standard Fourier basis $e^{ix\cdot\xi}$ \textit{almost} solves the eigenvalue equation $(2.1)$ in distribution sense:
$$\la(-\Delta+V) e^{ix\cdot\xi}, \psi\ra_{L^2}=\la|\xi|^2e^{ix\cdot\xi}, \psi\ra_{L^2}+\la Ve^{ix\cdot\xi}, \psi\ra_{L^2}\approx\la|\xi|^2e^{ix\cdot\xi}, \psi\ra_{L^2}\textup{ for all }\psi\in C_c^\infty.$$
Therefore, at least in high frequency, distorted Fourier multipliers can be approximated by standard ones. For example, we may guess that
$$(\varphi_N(|\xi|)\hat{f})^\vee-\mathcal{F}_V^{-1}(\varphi_N(|\xi|)\mathcal{F}_Vf)\to 0\textup{ as }N\to\infty$$
in some sense, where $\varphi\in C_c^\infty(\mathbb{R})$, $\supp\varphi\subset[1,2]$ and $\varphi_N:=\varphi(\frac{\cdot}{N})$, and that the convergence rate would be faster when a potential is more regular. If these are true, lack of the convolution property can be overcome in many situations.

\subsection{Formulation of the heuristic argument} We will make the above heuristics rigorous. Suppose that $V\in B\cap L^\frac{3}{2-\alpha}$ with $0<\alpha\leq2$ and $\|V_-\|_{\mathcal{K}}<4\pi$. For $\varphi_N$ as above, we consider the spectral multiplier $\varphi_N(\sqrt{H})$, defined by functional calculus. Note that if both the distorted Fourier transform and its inverse transform are well-defined, then a spectral multiplier is simply a distorted Fourier multiplier  $\varphi_N(\sqrt{H})f=\mathcal{F}_V^{-1}(\varphi_N(|\xi|)\mathcal{F}_Vf)$. From now, forgetting about the distorted Fourier transform, we only consider spectral multipliers, and we then aim to show that 
$$\varphi_N(\sqrt{-\Delta})-\varphi_N(\sqrt{H})$$
is \textit{small} for large $N$. This approach has two advantages: First, spectral multipliers are defined for any self-adjoint operator by functional calculus, so we do not need to worry about solvability of the Lippmann-Schwinger equation. Second, it allows us to avoid using the distorted Fourier basis. Indeed, this basis is very difficult to deal with, since $e(x,\xi)$'s are given only implicitly.

For notational convenience, we denote the difference of a Fourier multiplier and a spectral multiplier by
$$\mathfrak{D}_{\varphi,N}:=\varphi_N(\sqrt{-\Delta})-\varphi_N(\sqrt{H}).$$
The following two propositions explain the relation between local regularity of potentials and convergence rate:

\begin{lemma}[High frequency approximation (I)]\label{Approximation1} Suppose that $V\in\mathcal{K}_0\cap L^{\frac{3}{2-\alpha}}$ with $\alpha\in(0, 2]$ and $\|V_-\|_{\mathcal{K}}<4\pi$. Let $\varphi\in C_c^\infty$ with $\supp\varphi\subset[1,2]$ and $\varphi_N=\varphi(\frac{\cdot}{N})$. Then, for $N\gg1$,
\begin{equation}
\|\mathfrak{D}_{\varphi,N}\|_{L^p\to L^p}\lesssim N^{-\alpha},\ 1\leq p\leq\infty.
\end{equation}
\end{lemma}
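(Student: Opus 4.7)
The plan is to combine a functional calculus representation with the resolvent identity. Set $F_N(\lambda) := \varphi(\sqrt{\lambda}/N)$; this function is smooth, supported in $[N^2, 4N^2]$, and satisfies $\varphi_N(\sqrt{A}) = F_N(A)$ for $A \in \{-\Delta, H\}$. Since $F_N$ varies on the scale $N^2$ one has $\|F_N^{(k)}\|_\infty \lesssim N^{-2k}$, so $F_N$ admits an almost-analytic extension $\tilde F_N$ to $\mathbb{C}$, compactly supported in a thin neighborhood of $\supp(F_N) \subset \mathbb{R}$, satisfying
\[
|\bar\partial \tilde F_N(z)| \lesssim_M N^{-2(M+1)} |\Im(z)|^M
\]
for every integer $M \geq 0$. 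The Helffer--Sj\"ostrand formula then provides the representation $F_N(A) = -\pi^{-1}\int_{\mathbb{C}}\bar\partial\tilde F_N(z)(A - z)^{-1}\, dA(z)$.

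Taking the difference for $A = -\Delta$ and $A = H$ and invoking the second resolvent identity $(H - z)^{-1} - (-\Delta - z)^{-1} = -(H - z)^{-1} V (-\Delta - z)^{-1}$, I obtain
\[
\mathfrak{D}_{\varphi, N} = -\frac{1}{\pi}\int_{\mathbb{C}} \bar\partial \tilde F_N(z)\, (H - z)^{-1} V (-\Delta - z)^{-1}\, dA(z).
\]
For the per-$z$ bound I would apply H\"older's inequality against $V \in L^{3/(2-\alpha)}(\mathbb{R}^3)$ together with Kenig--Ruiz--Sogge uniform Sobolev estimates $\|(-\Delta - z)^{-1}\|_{L^{q'} \to L^{q}} \lesssim 1$ (for $1/q' - 1/q = 2/3$), and corresponding mapping bounds for $(H - z)^{-1}$ derived from a Born series in $V$ whose convergence is guaranteed by $V \in \mathcal{K}_0$ and $\|V_-\|_{\mathcal{K}} < 4\pi$. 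The $N^{-\alpha}$ gain arises from the Hardy--Littlewood--Sobolev shift of $(2-\alpha)/3$ in the H\"older exponent, translated into the frequency scale $|z|^{1/2} \sim N$. Integrating against $\bar\partial \tilde F_N$, one then chooses $M$ large enough that its vanishing on the real axis absorbs any inverse-power blow-up of the resolvent norms as $\Im(z) \to 0$, yielding the claimed bound $\|\mathfrak{D}_{\varphi, N}\|_{L^p \to L^p} \lesssim N^{-\alpha}$.

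The principal obstacle will be securing this uniformly for \emph{all} $p \in [1, \infty]$. The case $p = 2$ is immediate from the spectral theorem, and $p \in (1, \infty)$ is accessible through the uniform Sobolev estimates plus interpolation. The endpoints $p \in \{1, \infty\}$ are the most delicate; there one likely must fall back on direct kernel analysis, exploiting the explicit free resolvent kernel $(4\pi|x-y|)^{-1}e^{i\sqrt{z}|x-y|}$, its exponential off-axis decay for $\Im(z) > 0$, and an analogous pointwise bound for the kernel of $(H - z)^{-1}$ obtained by summing the Born series in $V$.
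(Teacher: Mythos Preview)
Your Helffer--Sj\"ostrand framework is a legitimate alternative, and the scaling heuristic for the $N^{-\alpha}$ gain is sound in spirit. But several points deserve scrutiny.

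First, the remark that $p=2$ is ``immediate from the spectral theorem'' is misleading: the spectral theorem gives only uniform boundedness of $\mathfrak D_{\varphi,N}$ on $L^2$, not the $N^{-\alpha}$ decay. You must still run the resolvent argument at $p=2$.

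Second, the mapping bounds you invoke for $(H-z)^{-1}$ at non-$L^2$ exponents are not free for $V \in \mathcal K_0 \cap L^{3/(2-\alpha)}$. The Born series converges only when $\|V R_0(z)\|$ is small in the relevant operator norm; near the real axis this amounts to a high-energy limiting absorption principle, which requires essentially the same kernel work you are trying to bypass.

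Third, and decisively: you concede that the endpoints $p\in\{1,\infty\}$ need direct kernel analysis. But once the $L^1\to L^1$ bound is in hand, duality ($\mathfrak D_{\varphi,N}$ is self-adjoint for real $\varphi$) gives $L^\infty\to L^\infty$, and interpolation covers all of $1\le p\le\infty$. The Helffer--Sj\"ostrand layer for the open range is therefore redundant.

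This is exactly the paper's route. It reduces by duality to $p=1$, writes $\mathfrak D_{\varphi,N}$ via Stone's formula on the real axis as
\[
\mathfrak D_{\varphi,N} = \frac{1}{\pi}\int_0^\infty \varphi_N(\sqrt\lambda)\,\Im\bigl[R_0^+(\lambda)\,(I + V R_0^+(\lambda))^{-1}\, V\, R_0^+(\lambda)\bigr]\,d\lambda,
\]
and then, quoting a kernel estimate from the author's earlier work on spectral multipliers, obtains a pointwise bound
\[
|\mathfrak D_{\varphi,N}(x,y)| \le \iint_{\mathbb R^6} \frac{N^2\,K(\tilde x,\tilde y)\,|V(\tilde y)|}{|x-\tilde x|\,\langle N(x-\tilde x)\rangle^3\,|\tilde y-y|\,\langle N(\tilde y-y)\rangle^3}\,d\tilde x\,d\tilde y
\]
with $K \in L^\infty_{\tilde y}L^1_{\tilde x}$. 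The $N^{-\alpha}$ then drops out of a single H\"older step against $V\in L^{3/(2-\alpha)}$ together with the scaling $\bigl\|\,|\cdot|^{-1}\langle N\cdot\rangle^{-3}\bigr\|_{L^{3/(1+\alpha)}}\sim N^{-\alpha}$. In short, your outline is not wrong, but the kernel analysis you defer to for the endpoints \emph{is} the proof, and the paper carries it out directly.
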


\begin{proof}
By duality, it suffices to show $(2.2)$ for $p=1$. For $z\notin\textup{Spec}(H)$, we define the resolvent by
$$R_V^\pm(\lambda):=\underset{{\epsilon\to0+}}{\textup{s-}\lim} (H-\lambda\pm i\epsilon)^{-1}.$$
By the identity
$$R_0^+(\lambda)-R_V^+(\lambda)=R_0^+(\lambda)(I-(I+VR_0^+(\lambda))^{-1})=R_0^+(\lambda)(I+VR_0^+(\lambda))^{-1}VR_0^+(\lambda)$$
and the Stone's formula, $\mathfrak{D}_{\varphi,N}=\varphi_N(\sqrt{-\Delta})-\varphi_N(\sqrt{H})$ is represented by
\begin{equation}
\begin{aligned}
\mathfrak{D}_{\varphi,N}&=\frac{1}{\pi}\int_0^\infty\varphi_N(\sqrt{\lambda})\Im R_0^+(\lambda)d\lambda-\frac{1}{\pi}\int_0^\infty \varphi_N(\sqrt{\lambda})\Im R_V^+(\lambda)d\lambda\\
&=\frac{1}{\pi}\int_0^\infty \varphi_N(\sqrt{\lambda})\Im[R_0^+(\lambda)(I+VR_0^+(\lambda))^{-1}VR_0^+(\lambda)]d\lambda.
\end{aligned}
\end{equation}
Then, arguing as in \cite[Section 4]{H1}, one can find $K(\tilde{x},\tilde{y})\in L_{\tilde{y}}^\infty L_{\tilde{x}}^1$ such that
$$|\mathfrak{D}_{\varphi,N}(x,y)|\leq\iint_{\mathbb{R}^6}\frac{N^2K(\tilde{x},\tilde{y})|V(\tilde{y})|}{|x-\tilde{x}|\la N(x-\tilde{x})\ra^3|\tilde{y}-y|\la N(\tilde{y}-y)\ra^3}d\tilde{x}d\tilde{y}$$
for sufficiently large $N$. Applying the H\"older inequality and the Minkowski inequality, we prove that 
\begin{align*}
\|\mathfrak{D}_{\varphi,N}f\|_{L^1}&\leq\Big\|\iiint_{\mathbb{R}^9}\frac{N^2K(\tilde{x},\tilde{y})|V(\tilde{y})||f(y)|}{|x-\tilde{x}|\la N(x-\tilde{x})\ra^3|\tilde{y}-y|\la N(\tilde{y}-y)\ra^3}d\tilde{x}d\tilde{y}dy\Big\|_{L_x^1}\\
&\leq\iiint_{\mathbb{R}^9}\Big\|\frac{N^2}{|x-\tilde{x}|\la N(x-\tilde{x})\ra^3}\Big\|_{L_x^1}\frac{K(\tilde{x},\tilde{y})|V(\tilde{y})||f(y)|}{|\tilde{y}-y|\la N(\tilde{y}-y)\ra^3}d\tilde{x}d\tilde{y}dy\\
&\lesssim\iint_{\mathbb{R}^6}\|K(\tilde{x},\tilde{y})\|_{L_{\tilde{x}}^1}\frac{|V(\tilde{y})||f(y)|}{|\tilde{y}-y|\la N(\tilde{y}-y)\ra^3}d\tilde{y}dy\\
&\leq\|V\|_{L^{\frac{3}{2-\alpha}}}\Big\|\int_{\mathbb{R}^3}\frac{|f(y)|}{|\tilde{y}-y|\la N(\tilde{y}-y)\ra^3}dy\Big\|_{L_{\tilde{y}}^{\frac{3}{1+\alpha}}}\\
&\leq\int_{\mathbb{R}^3}\Big\|\frac{1}{|\tilde{y}-y|\la N(\tilde{y}-y)\ra^3}\Big\|_{L_{\tilde{y}}^{\frac{3}{1+\alpha}}}|f(y)|dy\lesssim N^{-\alpha}\|f\|_{L^1}.
\end{align*}
\end{proof}

\begin{lemma}[High frequency approximation (II)]\label{Approximation2} Suppose that $V\in\mathcal{K}_0\cap L^{\frac{3}{2-\alpha},\infty}$ with $\alpha\in(0,2]$ and $\|V_-\|_{\mathcal{K}}<4\pi$. Let $\varphi\in C_c^\infty$ with $\supp\varphi\subset[1,2]$ and $\varphi_N=\varphi(\frac{\cdot}{N})$. Then, for $N\gg1$ and $\beta\in(0,1]$,
\begin{equation}
\||\nabla|^\beta \mathfrak{D}_{\varphi,N}\|_{L^p\to L^p}\lesssim N^{-\alpha+\beta},\ 1< p<\infty.
\end{equation}
\end{lemma}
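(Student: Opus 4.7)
The plan is to adapt the proof of Lemma~\ref{Approximation1} by inserting $|\nabla|^\beta$ on the left of the resolvent expansion, and to track how the extra derivative modifies the pointwise kernel bound. Starting from Stone's formula and the resolvent identity as in $(2.3)$, I would apply $|\nabla|^\beta$ to write
$$|\nabla|^\beta\mathfrak{D}_{\varphi,N} = \frac{1}{\pi}\int_0^\infty\varphi_N(\sqrt{\lambda})\,\Im\bigl[(|\nabla|^\beta R_0^+(\lambda))(I+VR_0^+(\lambda))^{-1} V R_0^+(\lambda)\bigr]\,d\lambda,$$
and then redo the kernel analysis of \cite[Section 4]{H1} with the single change that the leftmost $R_0^+(\lambda)$ is replaced by $|\nabla|^\beta R_0^+(\lambda)$. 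At the pointwise level, this upgrades the smoothed Littlewood--Paley kernel $\frac{N^2}{|x-\tilde{x}|\la N(x-\tilde{x})\ra^3}$ that appears in the proof of Lemma~\ref{Approximation1} to a kernel dominated by $\frac{N^2}{|x-\tilde{x}|^{1+\beta}\la N(x-\tilde{x})\ra^3}$: the effect of $|\nabla|^\beta$ is to worsen the local singularity at $x=\tilde{x}$ by $|x-\tilde{x}|^{-\beta}$ while preserving the decay at scale $|x-\tilde{x}|\gg 1/N$, and the right-hand kernel $\frac{1}{|\tilde{y}-y|\la N(\tilde{y}-y)\ra^3}$ is unchanged.

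Given this modified pointwise bound, I would reproduce the Schur/H\"older integration of Lemma~\ref{Approximation1}. The $\tilde{x}$-integration now contributes $\lesssim N^\beta$ in place of $\lesssim 1$, by the scaling $\int \frac{N^2}{|\tilde{x}|^{1+\beta}\la N\tilde{x}\ra^3}d\tilde{x}\lesssim N^\beta$, which is finite precisely because $\beta<2$. The $\tilde{y}$-integration is then handled by H\"older's inequality in Lorentz spaces to accommodate the weaker hypothesis $V\in L^{3/(2-\alpha),\infty}$: the resulting pairing is $\|V\|_{L^{3/(2-\alpha),\infty}}$ times the $L^{3/(1+\alpha),1}$-norm of $\frac{1}{|\tilde{y}-y|\la N(\tilde{y}-y)\ra^3}$, and a direct rearrangement computation bounds the latter Lorentz norm by $\lesssim N^{-\alpha}$ for $\alpha\in(0,2]$ (with $3/(1+\alpha)\in[1,3)$). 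Multiplying the two factors yields the claimed $N^{\beta-\alpha}$. The restriction $1<p<\infty$ reflects the fact that, unlike the strong H\"older pairing available in Lemma~\ref{Approximation1}, the Lorentz-H\"older pairing forced by the weak-$L^{3/(2-\alpha),\infty}$ hypothesis fails to close at the endpoints of the $L^p$ scale; equivalently, one can only interpolate between the corresponding Schur and dual-Schur bounds to open the range.

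The main technical hurdle is the kernel-level claim in the first step: namely, that $|\nabla|^\beta$ applied to the original smoothed LP kernel picks up exactly one factor of $|x-\tilde{x}|^{-\beta}$ in the local singularity, with no loss of far-field decay. I would justify this by writing $\frac{N^2}{|x-\tilde{x}|\la N(x-\tilde{x})\ra^3} = N^3\rho(N(x-\tilde{x}))$ with $\rho(w) = \frac{1}{|w|\la w\ra^3}\in L^1(\mathbb{R}^3)$, which behaves like $|w|^{-1}$ near the origin and like $|w|^{-4}$ at infinity, and then showing via the singular-integral representation $|\nabla|^\beta f(w) = c_\beta\,\mathrm{p.v.}\!\int \frac{f(w)-f(w+z)}{|z|^{3+\beta}}dz$ (valid for $\beta\in(0,2)$, hence for all $\beta\in(0,1]$) that $|\nabla|^\beta\rho(w)\sim |w|^{-1-\beta}$ near the origin with polynomial decay at infinity. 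Rescaling $w = N(x-\tilde{x})$ then recovers the claimed pointwise bound, and inserting it into the structural estimate of \cite[Section 4]{H1} completes the argument.
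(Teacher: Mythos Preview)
Your approach diverges from the paper's and, as written, has a genuine gap. The paper never applies $|\nabla|^\beta$ directly for fractional $\beta$. Instead it treats only $\beta=1$ by replacing $|\nabla|$ with the classical gradient $\nabla$ (using $\||\nabla|f\|_{L^p}\sim\|\nabla f\|_{L^p}$ for $1<p<\infty$), whose action on the free resolvent kernel is explicit:
\[
\nabla_x\frac{e^{i\sqrt{\lambda}|x-y|}}{4\pi|x-y|}=\frac{i\sqrt{\lambda}(x-y)e^{i\sqrt{\lambda}|x-y|}}{4\pi|x-y|^2}-\frac{(x-y)e^{i\sqrt{\lambda}|x-y|}}{4\pi|x-y|^3}.
\]
Because the oscillatory phase $e^{i\sqrt{\lambda}|x-y|}$ is preserved, the integration-by-parts machinery of \cite[Section~4]{H1} goes through verbatim and yields two pointwise pieces, giving $\|\nabla\mathfrak{D}_{\varphi,N}\|_{L^1\to L^1}\lesssim N^{-\alpha+1}$; duality (moving $\nabla$ to the rightmost $R_0^+(\lambda)$ via integration by parts) gives the $L^\infty$ bound. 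General $\beta\in(0,1]$ then follows by complex interpolation between this $\beta=1$ estimate and the $\beta=0$ bound~$(2.2)$.

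The gap in your argument is in the last paragraph. You propose to justify the modified kernel bound by computing $|\nabla|^\beta\rho$ for $\rho(w)=\tfrac{1}{|w|\langle w\rangle^3}$. But $\rho$ is only an \emph{upper bound} for (a piece of) the kernel of $\mathfrak{D}_{\varphi,N}$, and $|\nabla|^\beta$ does not preserve pointwise inequalities: from $|\mathfrak{D}_{\varphi,N}(x,y)|\le G(x,y)$ one cannot conclude $|(|\nabla|^\beta\mathfrak{D}_{\varphi,N})(x,y)|\le |\nabla|^\beta G(x,y)$. To carry out your plan correctly you would have to go back inside the $\lambda$-integral and analyze the actual kernel of $|\nabla|^\beta R_0^+(\lambda)$, i.e.\ the inverse Fourier transform of $\tfrac{|\xi|^\beta}{|\xi|^2-\lambda-i0}$. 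For $\beta\notin\mathbb{Z}$ this kernel has no closed form with a single clean oscillatory phase $e^{i\sqrt{\lambda}|\cdot|}$, so the \cite{H1} integration-by-parts argument does not transfer ``with the single change'' you describe. The paper sidesteps exactly this difficulty by using the local operator $\nabla$ at the endpoint $\beta=1$ and interpolating; this is also what produces the restriction $1<p<\infty$, via the Riesz-transform equivalence rather than via any Lorentz--H\"older issue.
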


\begin{proof}
Consider the case $\beta=1$. Since $\||\nabla|f\|_{L^p}\sim \|\nabla f\|_{L^p}$ for $1<p<\infty$, by interpolation, it suffices to show that 
$$\|\nabla \mathfrak{D}_{\varphi,N}\|_{L^p\to L^p}\lesssim N^{-\alpha}\textup{ for }p=1,\infty.$$
We take $\nabla$ to $(2.3)$. Then, since the kernel of $\nabla R_0^+(\lambda)$ is given by
$$(\nabla R_0^+(\lambda))(x,y)=\nabla_x\Big(\frac{e^{i\sqrt{\lambda}|x-y|}}{4\pi|x-y|}\Big)=\frac{i\sqrt{\lambda}(x-y)e^{i\sqrt{\lambda}|x-y|}}{4\pi|x-y|^2}-\frac{(x-y)e^{i\sqrt{\lambda}|x-y|}}{4\pi|x-y|^3},$$
by the above argument, one can find $K_1(\tilde{x},\tilde{y}),K_2(\tilde{x},\tilde{y})\in L_{\tilde{y}}^\infty L_{\tilde{x}}^1$ such that 
\begin{align*}
|\nabla_x\mathfrak{D}_{\varphi,N}(x,y)|&\leq\iint_{\mathbb{R}^6}\frac{N^3K_1(\tilde{x},\tilde{y})|V(\tilde{y})|}{|x-\tilde{x}|\la N(x-\tilde{x})\ra^3|\tilde{y}-y|\la N(\tilde{y}-y)\ra^3}d\tilde{x}d\tilde{y}\\
&+\iint_{\mathbb{R}^6}\frac{N^2K_2(\tilde{x},\tilde{y})|V(\tilde{y})|}{|x-\tilde{x}|^2\la N(x-\tilde{x})\ra^3|\tilde{y}-y|\la N(\tilde{y}-y)\ra^3}d\tilde{x}d\tilde{y}.
\end{align*}
Thus, it follows that
$$\|\nabla \mathfrak{D}_{\varphi,N}\|_{L^1\to L^1}\lesssim N^{-\alpha+1}.$$
For $p=\infty$, by duality, it suffices to show
$$(\nabla \mathfrak{D}_{\varphi,N})^*=\frac{1}{\pi}\int_0^\infty \varphi_N(\sqrt{\lambda})\Im[R_0^+(\lambda)(I+VR_0^+(\lambda))^{-1}VR_0^+(\lambda)\nabla]d\lambda$$
is bounded on $L^1$. But, by integration by parts, we have
$$(R_0^+(\lambda)\nabla)(x,y)=-\nabla_y\Big(\frac{e^{i\sqrt{\lambda}|x-y|}}{4\pi|x-y|}\Big)=\nabla_x\Big(\frac{e^{i\sqrt{\lambda}|x-y|}}{4\pi|x-y|}\Big).$$
Hence, repeating the same procedure, we prove that 
$$\|\nabla \mathfrak{D}_{\varphi,N}\|_{L^\infty\to L^\infty}=\|(\nabla \mathfrak{D}_{\varphi,N})^*\|_{L^1\to L^1}\lesssim N^{-\alpha+1}.$$
Finally, by interpolation between $(2.2)$ and $(2.4)$ with $\beta=1$, we complete the proof.
\end{proof}

Similarly, in high frequency, $|\nabla|^{-1}$ and $H^{1/2}$ are cancelled out up to small error:

\begin{corollary} If $V\in\mathcal{K}_0\cap L^{\frac{3}{2-\alpha},\infty}$ with $\alpha\in(0,2]$ and $\|V_-\|_{\mathcal{K}}<4\pi$, then for $N\gg1$, 
$$\|P_N|\nabla|^{-1}H^{1/2}-\mathcal{P}_N\|_{L^p\to L^p}\lesssim N^{-\alpha},\ \tfrac{3}{2}<p<\infty.$$
\end{corollary}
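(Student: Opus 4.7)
The plan is to reduce everything to the Fourier--spectral multiplier discrepancy $\mathfrak{D}_{\varphi,N}$ already handled by Lemmas~\ref{Approximation1} and~\ref{Approximation2}. First I would introduce $\varphi(y):=\chi(y)/y$, which lies in $C_c^\infty((0,\infty))$ since $\chi$ is supported in $[\tfrac12,2]$. From the algebraic identity $\varphi(y)\cdot y=\chi(y)$ and functional calculus one has
\begin{equation*}
P_N|\nabla|^{-1}=N^{-1}\varphi_N(|\nabla|),\qquad \mathcal{P}_N=N^{-1}\varphi_N(\sqrt{H})\,H^{1/2},
\end{equation*}
and subtracting yields the clean identity
\begin{equation*}
P_N|\nabla|^{-1}H^{1/2}-\mathcal{P}_N=N^{-1}\mathfrak{D}_{\varphi,N}H^{1/2}.
\end{equation*}
So the task is to show $\|\mathfrak{D}_{\varphi,N}H^{1/2}\|_{L^p\to L^p}\lesssim N^{1-\alpha}$ for $\tfrac32<p<\infty$.

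Next I would split $H^{1/2}=|\nabla|+(H^{1/2}-|\nabla|)$ and treat the two pieces separately. For the first piece, since $\varphi$ is real, $\mathfrak{D}_{\varphi,N}$ is self-adjoint, so duality and Lemma~\ref{Approximation2} with $\beta=1$ give
\begin{equation*}
\|\mathfrak{D}_{\varphi,N}|\nabla|\|_{L^p\to L^p}=\||\nabla|\mathfrak{D}_{\varphi,N}\|_{L^{p'}\to L^{p'}}\lesssim N^{1-\alpha},
\end{equation*}
which is valid since $p'\in(1,\infty)$ in our range. For the second piece, submultiplicativity together with Lemma~\ref{Approximation1} gives
\begin{equation*}
\|\mathfrak{D}_{\varphi,N}(H^{1/2}-|\nabla|)\|_{L^p\to L^p}\le\|\mathfrak{D}_{\varphi,N}\|_{L^p\to L^p}\,\|H^{1/2}-|\nabla|\|_{L^p\to L^p}\lesssim N^{-\alpha},
\end{equation*}
\emph{provided} one knows that $H^{1/2}-|\nabla|$ is bounded on $L^p$ in the relevant range.

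The main obstacle is therefore precisely the $L^p\to L^p$ boundedness of $H^{1/2}-|\nabla|$, which is what dictates the restriction $\tfrac32<p<\infty$. I would establish it from the resolvent representation
\begin{equation*}
H^{1/2}-|\nabla|=\frac{1}{\pi}\int_0^\infty\lambda^{1/2}\bigl[(-\Delta+\lambda)^{-1}-(H+\lambda)^{-1}\bigr]d\lambda=\frac{1}{\pi}\int_0^\infty\lambda^{1/2}(-\Delta+\lambda)^{-1}V(H+\lambda)^{-1}d\lambda,
\end{equation*}
bounding the integrand by combining standard resolvent estimates on $L^p$ with the Kato-class hypothesis on $V$, and using the smallness of $\|V_-\|_{\mathcal{K}}$ to invert $I+VR_0^+(\lambda)$ exactly as in the proof of Lemma~\ref{Approximation1}. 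Summing the two pieces yields $\|\mathfrak{D}_{\varphi,N}H^{1/2}\|_{L^p\to L^p}\lesssim N^{1-\alpha}$, and division by $N$ completes the proof.
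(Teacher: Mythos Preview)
Your algebraic reduction $P_N|\nabla|^{-1}H^{1/2}-\mathcal{P}_N=N^{-1}\mathfrak{D}_{\varphi,N}H^{1/2}$ is correct and matches the paper's setup, and your treatment of the first piece $\mathfrak{D}_{\varphi,N}|\nabla|$ via duality and Lemma~\ref{Approximation2} is fine. The problem is the second piece: you need $H^{1/2}-|\nabla|$ to be bounded on $L^p$, and this is \emph{not} a result available in the paper, nor is your resolvent sketch close to a proof. The integral $\int_0^\infty\lambda^{1/2}\|(-\Delta+\lambda)^{-1}V(H+\lambda)^{-1}\|_{L^p\to L^p}\,d\lambda$ is delicate near $\lambda=0$: the individual resolvents have norm $\sim\lambda^{-1}$, and extracting enough smoothing from $V$ to make the integrand $o(\lambda^{-3/2})$ requires specific mapping properties on $L^p$ that you have not stated, let alone verified under the weak-Lebesgue hypothesis $V\in L^{3/(2-\alpha),\infty}$. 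You also assert that this boundedness ``dictates the restriction $\tfrac32<p<\infty$'' without explaining why that range, and not some other, should emerge from the resolvent argument.

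The paper avoids this detour entirely. After the same reduction, it passes to the adjoint $\|H^{1/2}\mathfrak{D}_{\varphi,N}\|_{L^{p'}\to L^{p'}}$ and invokes the norm equivalence (Lemma~\ref{lem:NormEqv}) $\|H^{1/2}h\|_{L^{p'}}\lesssim\||\nabla|h\|_{L^{p'}}$, valid precisely for $1<p'<3$; this is where the restriction $p>\tfrac32$ genuinely comes from. One is then left with $\||\nabla|\mathfrak{D}_{\varphi,N}\|_{L^{p'}\to L^{p'}}$, which is exactly Lemma~\ref{Approximation2} with $\beta=1$. In other words, the norm equivalence replaces $H^{1/2}$ by $|\nabla|$ in one stroke, so there is no need to split $H^{1/2}=|\nabla|+(H^{1/2}-|\nabla|)$ or to prove that the difference is bounded.
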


\begin{proof}
First, we write
\begin{align*}
&\la(P_N|\nabla|^{-1}H^{1/2}-\mathcal{P}_N)f, g\ra_{L^2}=\la f, (H^{1/2}|\nabla|^{-1}P_N-\mathcal{P}_N)g\ra_{L^2}\\
&=\la f, H^{1/2}(|\nabla|^{-1}P_N-H^{-\frac{1}{2}}\mathcal{P}_N)g\ra_{L^2}\leq \|f\|_{L^p}\|H^{1/2}(|\nabla|^{-1}P_N-H^{-\frac{1}{2}}\mathcal{P}_N)g\|_{L^{p'}}.
\end{align*}
Then, by the norm equivalence (Lemma 3.5 below) with $1<p'<\frac{3}{2}$, we get 
$$\|H^{1/2}(|\nabla|^{-1}P_N-H^{-\frac{1}{2}}\mathcal{P}_N)g\|_{L^{p'}}\lesssim\||\nabla|(|\nabla|^{-1}P_N-H^{-\frac{1}{2}}\mathcal{P}_N)g\|_{L^{p'}}.$$
Observe that $|\nabla|^{-1}P_N$ and $H^{1/2}\mathcal{P}_N$ have the same symbol $\lambda^{-1}\chi_N$. It thus follows from Lemma \ref{Approximation1} that 
$$\|H^{1/2}(|\nabla|^{-1}P_N-H^{-\frac{1}{2}}\mathcal{P}_N)g\|_{L^{p'}}\lesssim N^{-1}N^{-\alpha+1}\|g\|_{L^{p'}}=N^{-\alpha}\|g\|_{L^{p'}}.$$
By duality, we obtain the corollary. 
\end{proof}

\section{Wave Operators and their Applications}
We define the (forward-in-time) \textit{wave operator} by
$$W:=\underset{t\to+\infty}{s\mbox{-}\lim}e^{itH}e^{-it(-\Delta)}.$$
The wave operator is a useful tool because of its intertwining property: for any Borel function $m:\mathbb{R}\to\mathbb{C}$,
\begin{equation}
m(H)P_c=W m(-\Delta) W^*,
\end{equation}
where $P_c$ is the spectral projection to the continuous spectrum and $W^*$ is the dual of $W$.

In \cite{B}, Beceanu obtained the structure formula for the wave operator. Let $O(3)=\{S\in\mathcal{B}(\mathbb{R}^3,\mathbb{R}^3): S^*S=I\}$ be the group of orthogonal linear transformations on $\mathbb{R}^3$. We say that $\psi\in L^\infty$ is a \textit{zero resonance} if $(I+(-\Delta)^{-1}V)\psi=0$.

\begin{theorem}[Structure of the wave operator \cite{B}]\label{thm:Structure} If $V\in B$ and that $H$ does not have a zero resonance, then there exists a measure $g_{s,y}(x)$ such that
$$(Wf)(x)=f(x)+\int_{\mathbb{R}^3}\Big(\int_{O(3)} f(sx+y) d g_{s,y}(x) \Big)dy,$$
and $\int_{\mathbb{R}^3}\int_{O(3)} d\|g_{s,y}(x)\|_{L_x^\infty}dy<\infty$. Thus, $W$ is bounded on $L^p$ for $1\leq p\leq \infty.$
\end{theorem}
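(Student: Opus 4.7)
The plan is to derive the structure formula from a stationary representation of $W$, followed by an iterative (Born-series) expansion. Starting from the intertwining identity and Stone's formula for the spectral resolutions of $H$ and of $-\Delta$, I would express
$$W = I - \lim_{\epsilon\downarrow 0}\int_0^\infty R_V^+(\lambda)\,V\,E_0'(\lambda)\,d\lambda,$$
where $E_0'(\lambda) = \frac{1}{2\pi i}(R_0^+(\lambda) - R_0^-(\lambda))$ is the density of the spectral measure of the free Laplacian. The absence of a zero resonance together with $V\in B$ guarantees that the factor $(I + V R_0^+(\lambda))^{-1}$ appearing implicitly through $R_V^+(\lambda) = R_0^+(\lambda)(I + V R_0^+(\lambda))^{-1}$ is uniformly bounded down to $\lambda = 0$ on an appropriate auxiliary space. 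Iterating the second resolvent identity $R_V^+ = R_0^+ - R_V^+\,V\,R_0^+$ then converts the integral into a Born series $W - I = \sum_{n\geq 1}(-1)^n T_n$ whose $n$-th term is
$$T_n f = \frac{1}{\pi i}\int_0^\infty R_0^+(\lambda)\,V\,R_0^+(\lambda)\,V\cdots V\,R_0^+(\lambda)\,f\,d\lambda$$
with $n$ factors of $V$.

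Next I would compute the kernel of each $T_n$ explicitly from the $3$d free resolvent $R_0^+(\lambda)(x,y) = e^{i\sqrt\lambda|x-y|}/(4\pi|x-y|)$. Substituting $\rho = \sqrt\lambda$ and performing the $\rho$-integral first, the result is a distribution on polygonal chains $x = z_0, z_1, \dots, z_n, z_{n+1} = y$ supported where $\sum_j |z_j - z_{j+1}|$ takes a fixed value. This geometric constraint permits a geometric reparametrization: the endpoint $z_{n+1}$ relative to $x$ is determined by an orthogonal transformation $s\in O(3)$ acting on a reference vector together with a translation $y\in\mathbb{R}^3$ carrying the intermediate scattering chain, leading to a representation
$$(T_n f)(x) = \int_{\mathbb{R}^3}\int_{O(3)} f(sx + y)\,d g^{(n)}_{s,y}(x)\,dy$$
for an explicit measure-valued family $g^{(n)}_{s,y}$.

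The main obstacle is producing a summable bound for the total variations in $n$. Here the Wiener-amalgam structure of $B$ is decisive: the condition $\sum_k 2^{k/2}\|V\|_{L^2(2^k \leq |x| < 2^{k+1})} < \infty$ controls the interaction of $V$ with the Newtonian kernel $|x-y|^{-1}$ through a Schur-type estimate and gives, after isolating the high-frequency part of $V$ which produces a small operator norm,
$$\int_{\mathbb{R}^3}\int_{O(3)} d\|g^{(n)}_{s,y}\|_{L_x^\infty}\,dy \leq C c^n$$
for some $c<1$. The zero-resonance hypothesis, needed to invert $I + V R_0^+(\lambda)$ uniformly as $\lambda \downarrow 0$, handles the low-frequency contribution. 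Absolute summability of the series then yields $\int\int d\|g_{s,y}\|_{L_x^\infty}\,dy < \infty$, and the $L^p$-boundedness of $W$ for $1 \leq p \leq \infty$ follows at once because the map $f\mapsto f(sx + y)$ is an $L^p$-isometry (as $s \in O(3)$), so
$$\|Wf\|_{L^p} \leq \Big(1 + \int_{\mathbb{R}^3}\int_{O(3)} d\|g_{s,y}\|_{L_x^\infty}\,dy\Big)\|f\|_{L^p}.$$
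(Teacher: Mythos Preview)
The paper does not prove this theorem; it is quoted from Beceanu \cite{B} and used as a black box throughout Section~3. There is therefore no proof in the paper to compare your proposal against.

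As a sketch of the argument in \cite{B}, your outline points in the right direction (stationary representation of $W$, resolvent expansion, exploitation of the explicit free resolvent kernel, and the Wiener-type norm $B$ to control the interaction with $|x-y|^{-1}$), but two steps are much heavier than you indicate. First, the Born series $\sum_n (-1)^n T_n$ does not converge for general $V\in B$; one instead iterates finitely many times and controls the remainder via $(I+VR_0^+(\lambda))^{-1}$, which is where the zero-resonance hypothesis actually enters. Your claimed geometric bound $\int\!\!\int d\|g^{(n)}_{s,y}\|_{L_x^\infty}\,dy \le C c^n$ with $c<1$ is not available without first splitting off a small piece of $V$. Second, the ``geometric reparametrization'' producing the $O(3)\times\mathbb{R}^3$ structure is the technical core of \cite{B}: the passage from the polygonal-chain kernel of $T_n$ to an integral over $(s,y)\in O(3)\times\mathbb{R}^3$ with an $L_x^\infty$-bounded density is not a routine change of variables and requires a delicate decomposition of the angular part of the free resolvent. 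Your sentence ``the endpoint $z_{n+1}$ relative to $x$ is determined by an orthogonal transformation $s\in O(3)$ \dots'' does not yet explain why the resulting measure has the claimed $L_x^\infty$ integrability.
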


\begin{remark}
$(i)$ In \cite{B}, the definition of a zero resonance is slightly different, but it is essentially equivalent to that in Theorem 3.1 (see $(2.4)$ of \cite{B}).\\
$(ii)$ If $V\in B$ and $\|V_-\|_{\mathcal{K}}<4\pi$, $H$ has no zero resonance (so $P_c=0$) (Lemma A.1).
\end{remark}

By Theorem \ref{thm:Structure} and the intertwining property, one can easily derive the following estimates from the homogeneous  analogues:

\begin{lemma}[Spectral multiplier theorem] Suppose that $V\in B$ and $\|V_-\|_{\mathcal{K}}<4\pi$. Let $m:(0,+\infty)\to\mathbb{C}$ be a symbol of order zero, i.e., $|\partial_\lambda^km(\lambda)|\lesssim_k\lambda^{-k}$ for $\lambda>0$ and $0\leq k\leq 5$. Then,
$$\|m(\sqrt{H})\|_{L^p\to L^p}<\infty,\ 1<p<\infty.$$
\end{lemma}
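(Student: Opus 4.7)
The plan is to reduce the claim to the classical boundedness of a Fourier multiplier of Mikhlin type by conjugating with the wave operator. The key ingredients are the intertwining identity (3.1), Beceanu's structure theorem (Theorem 3.1), and the Mikhlin multiplier theorem on $\mathbb{R}^3$.

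First I would check that the hypotheses put us in a setting where Theorem 3.1 applies and the intertwining formula simplifies. Since $V\in B$ and $\|V_-\|_{\mathcal{K}}<4\pi$, Remark 3.2(ii) (via Lemma A.1) guarantees that $H$ has no zero resonance, and that the continuous spectral projection reduces to the identity on $L^2$ (no bound states), so the intertwining identity reads
$$m(\sqrt{H})=W\,m(\sqrt{-\Delta})\,W^{*}.$$

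Next, by Theorem 3.1, $W$ is bounded on $L^p$ for every $1\le p\le\infty$; duality then gives boundedness of $W^{*}$ on $L^{p'}$ for every $1\le p'\le\infty$. Hence for any $1<p<\infty$,
$$\|m(\sqrt{H})f\|_{L^p}\le \|W\|_{L^p\to L^p}\,\|m(\sqrt{-\Delta})\|_{L^p\to L^p}\,\|W^{*}\|_{L^p\to L^p}\,\|f\|_{L^p},$$
so the whole lemma is reduced to the boundedness of the flat Fourier multiplier $m(\sqrt{-\Delta})$ on $L^p(\mathbb{R}^3)$.

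Finally, I would derive the latter from the classical Mikhlin--H\"ormander multiplier theorem: the function $\xi\mapsto m(|\xi|)$ is radial, and the assumed bounds $|\partial_\lambda^k m(\lambda)|\lesssim_k\lambda^{-k}$ for $0\le k\le 5$ translate via the chain rule into $|\partial_\xi^\alpha m(|\xi|)|\lesssim |\xi|^{-|\alpha|}$ for all multi-indices $|\alpha|\le 5$. Since in dimension three one needs only derivatives up to order $\lfloor 3/2\rfloor+1=2$, the Mikhlin condition is amply satisfied, and $m(\sqrt{-\Delta})$ is bounded on $L^p$ for all $1<p<\infty$.

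There is no genuine obstacle here; the proof is essentially bookkeeping. The only point that requires care is ensuring $P_c=I$ so that the intertwining identity does not leave a residual projection term—this is exactly what Remark 3.2(ii) is designed to supply.
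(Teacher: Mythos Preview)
Your argument is correct and matches the paper's own proof: write $m(\sqrt{H})=W\,m(\sqrt{-\Delta})\,W^{*}$ via the intertwining property (using Remark~3.2(ii) to ensure $P_c=I$), then invoke Theorem~3.1 for the $L^p$-boundedness of $W,W^{*}$ and the classical Mikhlin--H\"ormander theorem for the flat multiplier. The paper states this in a single sentence; you have simply filled in the details.
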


\begin{proof}
By the intertwining property, the boundedness of $m(\sqrt{H})=Wm(-\Delta)W^*$ follows from the boundedness of the wave operator and the classical Fouirer multiplier theorem.
\end{proof}

\begin{remark}
The spectral multiplier theorem holds for a larger class of potentials \cite{H1}.  
\end{remark}

\begin{lemma}[Norm equivalence]\label{lem:NormEqv} If $V\in B$ and $\|V_-\|_{\mathcal{K}}<4\pi$, then for $s\in[0,2]$,
$$\|H^{\frac{s}{2}}f\|_{L^r}\sim\||\nabla|^sf\|_{L^r},\ 1<r<\tfrac{3}{s}.$$
\end{lemma}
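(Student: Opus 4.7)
The plan is to prove the equivalence first at the endpoints $s=0$ (trivial) and $s=2$, then interpolate intermediate values by Stein's complex interpolation theorem. A preliminary observation, used throughout, is that $V\in B$ forces $V\in L^{3/2}$: H\"older on each dyadic annulus $A_k=\{2^k\leq|x|<2^{k+1}\}$ gives $\|V\|_{L^{3/2}(A_k)}\leq|A_k|^{1/6}\|V\|_{L^2(A_k)}\lesssim 2^{k/2}\|V\|_{L^2(A_k)}$, whence $\|V\|_{L^{3/2}}^{3/2}\lesssim\sum_k(2^{k/2}\|V\|_{L^2(A_k)})^{3/2}<\infty$ by the set-theoretic embedding $\ell^1\subset\ell^{3/2}$ (applied to the bracketed sequence, which is summable by the definition of $B$).

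With $V\in L^{3/2}$ in hand, I handle $s=2$ directly in the range $1<r<\tfrac{3}{2}$. Writing $H=-\Delta+V$, the upper bound $\|Hf\|_{L^r}\lesssim\|\Delta f\|_{L^r}$ follows from $\|Vf\|_{L^r}\leq\|V\|_{L^{3/2}}\|f\|_{L^{r^*}}\lesssim\|\Delta f\|_{L^r}$, via H\"older with $\tfrac{1}{r^*}=\tfrac{1}{r}-\tfrac{2}{3}$ followed by Sobolev embedding. For the reverse bound, I invoke the intertwining identity $H^{-1}=W(-\Delta)^{-1}W^*$, combined with Hardy-Littlewood-Sobolev and the $L^p$-boundedness of $W,W^*$ from Theorem \ref{thm:Structure}, to obtain $H^{-1}:L^r\to L^{r^*}$; then $\|f\|_{L^{r^*}}=\|H^{-1}Hf\|_{L^{r^*}}\lesssim\|Hf\|_{L^r}$, and $\|\Delta f\|_{L^r}\leq\|Hf\|_{L^r}+\|Vf\|_{L^r}\lesssim\|Hf\|_{L^r}$.

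For intermediate $s\in(0,2)$, I apply Stein interpolation to the analytic families $T_z:=H^z(-\Delta)^{-z}$ and $S_z:=(-\Delta)^z H^{-z}$ in the strip $0\leq\Re(z)\leq 1$. At $\Re(z)=0$, Lemma \ref{Approximation1}-style reductions are irrelevant; instead, Lemma 3.3 applied to the order-zero symbols $\lambda\mapsto\lambda^{\pm it}$ (whose derivative bounds scale as $\la t\ra^k\lambda^{-k}$) gives $L^p$-boundedness of $H^{\pm it}$ and $(-\Delta)^{\pm it}$ for every $1<p<\infty$ with operator norms polynomial in $|t|$; composing yields the same bound for $T_{it}$ and $S_{it}$. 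At $\Re(z)=1$, the functional-calculus factorizations $T_{1+it}=H^{it}\,T_1\,(-\Delta)^{-it}$ and $S_{1+it}=(-\Delta)^{it}\,S_1\,H^{-it}$ (both verified by commuting powers within each of the two functional calculi separately) combine the $s=2$ endpoint with the above imaginary-power bounds to give $L^p$-boundedness for $1<p<\tfrac{3}{2}$, still with polynomial growth in $|t|$. Stein interpolation at $z=s/2$, with $p_0\in(1,\infty)$ and $p_1\in(1,\tfrac{3}{2})$ optimized so that $\tfrac{1}{r}=\tfrac{1-s/2}{p_0}+\tfrac{s/2}{p_1}$ sweeps out $(0,\tfrac{s}{3})$, then yields boundedness of $T_{s/2}$ and $S_{s/2}$ on $L^r$ for every $1<r<\tfrac{3}{s}$, which is exactly the claimed equivalence.

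The main technical obstacle is the \emph{quantitative} polynomial-in-$t$ control of $\|H^{\pm it}\|_{L^p\to L^p}$ required for Stein interpolation, since Lemma 3.3 is stated only qualitatively. This can be extracted from its proof: the wave-operator intertwining reduces matters to the classical H\"ormander-Mikhlin multiplier theorem applied to $\lambda^{\pm it}$, whose constants depend on only finitely many symbol seminorms and therefore scale polynomially in $|t|$, with the two factors of $W,W^*$ contributing only bounded multiplicative constants. A secondary bookkeeping issue is justifying the admissibility of $z\mapsto\la T_z f,g\ra$ as an analytic family on Schwartz pairs, which follows from the analyticity of the Borel functional calculus for the self-adjoint operators $H$ and $-\Delta$.
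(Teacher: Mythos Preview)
Your proof is correct and follows essentially the same approach as the paper: both apply Stein's complex interpolation to the analytic families $H^z(-\Delta)^{-z}$ and $(-\Delta)^zH^{-z}$ on the strip $0\leq\Re z\leq 1$, using the spectral multiplier theorem (Lemma 3.3) to control the imaginary powers. The paper's proof is only a brief sketch that defers details to \cite{H1}, whereas you have supplied the $s=2$ endpoint argument and the quantitative control of the imaginary boundary explicitly (note a minor slip: your interpolated range for $1/r$ should read $(s/3,1)$, not $(0,s/3)$, though your conclusion $1<r<3/s$ is correct).
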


\begin{proof}
By the spectral multiplier theorem, imaginary power operators $H^{i\alpha}$, with $\alpha\in\mathbb{R}$, are bounded on $L^p$. Applying the Stein's complex interpolation to the bounded analytic families of operators $H^z(-\Delta)^{-z}$ and $(-\Delta)^zH^{-z}$ on $\{z\in\mathbb{C}: 0\leq\Re z\leq1\}$, we derive the norm equivalence. See details in \cite{H1}.
\end{proof}

\begin{lemma}[Sobolev inequality] If $V\in B$ and $\|V_-\|_{\mathcal{K}}<4\pi$, then
$$\|H^{-\frac{s}{2}}f\|_{L^q}\lesssim\|f\|_{L^p},$$
where $1<p<q<\infty$, $0\leq s\leq 2$ and $\frac{1}{q}=\frac{1}{p}-\frac{s}{3}$.
\end{lemma}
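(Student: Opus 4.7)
The plan is to reduce the inequality to the classical Hardy--Littlewood--Sobolev embedding on $\mathbb{R}^3$ by sandwiching $H^{-s/2}$ between $|\nabla|^{-s}$ and the norm equivalence of Lemma \ref{lem:NormEqv}. The case $s=0$ is trivial, so assume $0<s\leq 2$. Observe first that the scaling relation $\frac{1}{q}=\frac{1}{p}-\frac{s}{3}$ together with $p<q$ forces $\frac{s}{3}<\frac{1}{p}$, i.e.\ $p<\frac{3}{s}$, which is exactly the range in which Lemma \ref{lem:NormEqv} is valid. No additional restriction is lost.

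Set $g:=H^{-s/2}f$, which is well defined on $L^2$ by functional calculus, since under the hypotheses $V\in B$ and $\|V_-\|_{\mathcal{K}}<4\pi$ the operator $H$ is nonnegative and has no zero eigenvalue or zero resonance (so $P_c=I$, by Lemma A.1). Applying Lemma \ref{lem:NormEqv} with exponent $r=p\in(1,\tfrac{3}{s})$ gives
$$\||\nabla|^s g\|_{L^p}\sim \|H^{s/2}g\|_{L^p}=\|f\|_{L^p}.$$
Now rewrite
$$H^{-s/2}f=g=|\nabla|^{-s}\bigl(|\nabla|^s g\bigr)$$
and invoke the classical fractional integration inequality $\||\nabla|^{-s}h\|_{L^q}\lesssim \|h\|_{L^p}$ on $\mathbb{R}^3$, valid whenever $1<p<q<\infty$ and $\frac{1}{q}=\frac{1}{p}-\frac{s}{3}$, with $h=|\nabla|^s g$. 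This yields
$$\|H^{-s/2}f\|_{L^q}=\||\nabla|^{-s}(|\nabla|^s g)\|_{L^q}\lesssim \||\nabla|^s g\|_{L^p}\sim \|f\|_{L^p},$$
which is the desired estimate.

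No substantial obstacle arises: the argument is a clean composition of two already-proved facts. The only minor point to watch is that the intermediate quantity $|\nabla|^s H^{-s/2}f$ must be a bona fide $L^p$ function so that the two estimates can be chained; this follows by first assuming $f$ lies in a dense subclass (say $f\in\mathcal{S}(\mathbb{R}^3)\cap L^p$) on which all operators act classically, and then extending by the uniform bound just derived. The sharp constant $4\pi$ in $\|V_-\|_{\mathcal{K}}<4\pi$ is used only through its role in Lemma \ref{lem:NormEqv} and in ensuring $H\ge 0$; no additional smallness is required for this lemma.
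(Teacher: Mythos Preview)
Your proof is correct and follows essentially the same approach as the paper: the paper's one-line argument (``follows from the standard Sobolev inequality and the norm equivalence'') is exactly the composition you carry out in detail, including the observation that the scaling relation forces $p<\tfrac{3}{s}$ so that Lemma~\ref{lem:NormEqv} applies.
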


\begin{proof}
The Sobolev inequality follows from the standard Sobolev inequality and the norm equivalence.
\end{proof}

We define the standard (perturbed, resp) $X_{s,b}$-norm defined by 
$$\|u\|_{X_{s,b}}:=\|\la\nabla\ra^s\la \tau+\Delta\ra^b(\mathcal{F}_tu)\|_{L_{\tau,x}^2}\ \Big(\|u\|_{\mathcal{X}_{s,b}}:=\|\la H^{1/2}\ra^s\la \tau-H\ra^b(\mathcal{F}_tu)\|_{L_{\tau,x}^2}\textup{, resp}\Big),$$
where $\mathcal{F}_tu$ is the temporal Fourier transform of $u$ and $\la H^{1/2}\ra^s\la \tau-H\ra^b$ is given by functional calculus. We also define 
$$\|u\|_{X_{s,b}^\delta}:=\inf_{v=u\textup{ on }[0,\delta]}\|v\|_{X_{s,b}}\ \Big(\|u\|_{\mathcal{X}_{s,b}^\delta}:=\inf_{v=u\textup{ on }[0,\delta]}\|v\|_{\mathcal{X}_{s,b}}\Big).$$

\begin{lemma}[Strichartz estimates] If $V\in B$ and $\|V_-\|_{\mathcal{K}}<4\pi$, then
$$\|F\|_{L_t^q L_x^r(\mathbb{R}\times\mathbb{R}^3)}\lesssim \|F\|_{\mathcal{X}_{0,1/2+}},$$
where $2\leq q,r\leq\infty$ and $\frac{2}{q}+\frac{3}{r}=\frac{3}{2}$.
\end{lemma}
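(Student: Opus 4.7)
The plan is to establish the bound in two standard steps: first prove the linear Strichartz estimate for the perturbed propagator $e^{-itH}$, and then upgrade it to the $\mathcal{X}_{0,1/2+}$ norm via the standard $X_{s,b}$ transfer principle.

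For the linear estimate, note that under our hypotheses Lemma A.1 excludes zero resonances and negative eigenvalues of $H$, so the continuous spectral projection satisfies $P_c = I$. Applying the intertwining identity $(3.1)$ to the Borel symbol $\lambda\mapsto e^{-it\lambda^2}$ therefore yields $e^{-itH}=We^{it\Delta}W^*$. By Theorem~\ref{thm:Structure}, $W$ is bounded on $L^p$ for every $1\leq p\leq\infty$, and since $W$ is a surjective $L^2$-isometry in this setting, $W^*$ is a unitary operator on $L^2$. Composing these bounds with the classical Strichartz estimates for $e^{it\Delta}$ (including the Keel--Tao endpoint) we obtain
$$\|e^{-itH}\phi\|_{L^q_tL^r_x}\lesssim\|e^{it\Delta}W^*\phi\|_{L^q_tL^r_x}\lesssim\|W^*\phi\|_{L^2}\lesssim\|\phi\|_{L^2}$$
for every admissible pair $(q,r)$.

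To upgrade to the $\mathcal{X}_{0,1/2+}$ norm, write $F(t)=\frac{1}{2\pi}\int e^{-it\tau}\tilde F(\tau)\,d\tau$ with $\tilde F = \mathcal{F}_tF$, and spectrally decompose $\tilde F(\tau)=\int dE_\lambda\tilde F(\tau)$ via the spectral resolution $E_\lambda$ of $H$. Substituting $\sigma = \tau-\lambda$ (for fixed $\lambda$) and factoring $e^{-it(\sigma+\lambda)}=e^{-it\sigma}e^{-it\lambda}$ packages $F$ as
$$F(t)=\frac{1}{2\pi}\int e^{-it\sigma}\,e^{-itH}\phi_\sigma\,d\sigma,\qquad \phi_\sigma:=\int dE_\lambda\,\tilde F(\sigma+\lambda).$$
Minkowski's inequality (valid since $q,r\geq 2$) together with the linear estimate from the previous step yields $\|F\|_{L^q_tL^r_x}\lesssim\int\|\phi_\sigma\|_{L^2}\,d\sigma$, and Cauchy--Schwarz with weight $\langle\sigma\rangle^{1/2+}$, combined with orthogonality of the spectral projections and reversing the change of variables, identifies the right-hand side with $\|F\|_{\mathcal{X}_{0,1/2+}}$.

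The only nontrivial input is the intertwining identification $e^{-itH}=We^{it\Delta}W^*$, which depends on Beceanu's structure theorem (Theorem~\ref{thm:Structure}) together with the absence of zero resonances and negative eigenvalues provided by Lemma A.1. Once these ingredients are in place, the $X_{s,b}$ transfer argument is purely formal, so I do not anticipate any genuine analytic obstacle at this step.
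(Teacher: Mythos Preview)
Your proof is correct, but it takes a slightly different route from the paper. The paper argues entirely through the wave operator: using Fubini and the intertwining property it establishes the identity $\|F\|_{\mathcal{X}_{s,b}}=\|W^*F\|_{X_{s,b}}$ (equation (3.2)), writes $F=WW^*F$, applies $L^r$-boundedness of $W$ to get $\|F\|_{L_t^qL_x^r}\lesssim\|W^*F\|_{L_t^qL_x^r}$, and then simply quotes the free $X_{s,b}$--Strichartz estimate from \cite{CKSTT1} for $W^*F$. In contrast, you use the wave operator only at the linear level, to obtain the homogeneous Strichartz estimate for $e^{-itH}$, and then rerun the transfer principle intrinsically via the spectral resolution of $H$. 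Both arguments rely on the same ingredients (Theorem \ref{thm:Structure} and Lemma A.1); the paper's is a bit slicker because it avoids redoing the transfer argument and, more importantly, the identity (3.2) it isolates is reused verbatim in the proof of Lemma 3.8. Your approach has the minor advantage of being self-contained rather than citing the free $X_{s,b}$ estimate as a black box.
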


\begin{proof}
By the Fubini theorem, the temporal Fourier transform $\mathcal{F}_t$ commutes with the wave operator (see the structure formula in Theorem 3.1). We thus have
\begin{equation}
\|F\|_{\mathcal{X}_{s,b}}=\|W\la\nabla\ra^s\la\tau+\Delta\ra^b W^* ((\mathcal{F}_t F)(\tau))\|_{L_{\tau,x}^2}=\|W^*F\|_{X_{s,b}}.
\end{equation}
Hence, Lemma 3.7 follows from boundedness of the wave operator and $(2.5)$ of \cite{CKSTT1}:
$$\|F\|_{L_t^q L_x^r(\mathbb{R}\times\mathbb{R}^3)}=\|WW^*F\|_{L_t^q L_x^r(\mathbb{R}\times\mathbb{R}^3)}\lesssim\|W^*F\|_{L_t^q L_x^r(\mathbb{R}\times\mathbb{R}^3)}\lesssim\|W^*F\|_{X_{0,1/2+}}=\|F\|_{\mathcal{X}_{0,1/2+}}.$$
\end{proof}

\begin{lemma}[Some estimates involving the $\mathcal{X}_{s,b}$-norm] If $V\in B$ and $\|V_-\|_{\mathcal{K}}<4\pi$, then
\begin{align}
\|e^{-itH}f\|_{\mathcal{X}_{1,1/2+}^\delta}&\lesssim\|f\|_{H^1},\\
\Big\|\int_0^te^{-i(t-s)H}F(s)ds\Big\|_{\mathcal{X}_{1,1/2+}^\delta}&\lesssim\|F\|_{\mathcal{X}_{1,-1/2+}^\delta},\\
\|F\|_{\mathcal{X}_{1,-b}^\delta}&\lesssim\delta^P\|F\|_{\mathcal{X}_{1,-\beta}^\delta},
\end{align}
where $0<\beta<b<\frac{1}{2}$, and $P=\frac{1}{2}(1-\frac{\beta}{b})>0$.
\end{lemma}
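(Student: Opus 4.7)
The plan is to reduce all three estimates to their classical ($V=0$) counterparts via the identity (3.2), i.e.\ $\|F\|_{\mathcal{X}_{s,b}}=\|W^{*}F\|_{X_{s,b}}$, together with the intertwining property. Under the hypotheses, Lemma A.1 yields $P_{c}=I$, so that $W$ is in fact unitary and the intertwining identity $m(H)=Wm(-\Delta)W^{*}$ holds for every Borel $m$, with no projection. In particular, taking $m(\lambda)=e^{-it\lambda}$ gives the key conjugation formula $W^{*}e^{-itH}=e^{it\Delta}W^{*}$, which will let me peel off the perturbation and fall back on the classical homogeneous $X_{s,b}$ theory.

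For (3.5), I would first apply (3.2) to write $\|e^{-itH}f\|_{\mathcal{X}_{1,1/2+}^{\delta}}=\|W^{*}e^{-itH}f\|_{X_{1,1/2+}^{\delta}}=\|e^{it\Delta}W^{*}f\|_{X_{1,1/2+}^{\delta}}$. The classical linear $X_{s,b}$ estimate (e.g.\ (2.3) of \cite{CKSTT1}) bounds this by $\|W^{*}f\|_{H^{1}}$. To return to $\|f\|_{H^{1}}$, I use the intertwining at the level of square roots: $W^{*}H^{1/2}=(-\Delta)^{1/2}W^{*}$, so $\||\nabla|W^{*}f\|_{L^{2}}=\|W^{*}H^{1/2}f\|_{L^{2}}\lesssim\|H^{1/2}f\|_{L^{2}}$, which by the norm equivalence in Lemma \ref{lem:NormEqv} is comparable to $\||\nabla|f\|_{L^{2}}$; combined with the $L^{2}$-boundedness of $W^{*}$ from Theorem \ref{thm:Structure} this gives $\|W^{*}f\|_{H^{1}}\lesssim\|f\|_{H^{1}}$. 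For (3.6), the same trick writes the Duhamel integral as $W\int_{0}^{t}e^{i(t-s)\Delta}W^{*}F(s)\,ds$, and (3.2) plus $W^{*}W=I$ converts $\mathcal{X}$-norms on both sides to $X$-norms on $W^{*}F$, reducing the claim to the classical inhomogeneous estimate for $-\Delta$ applied to the datum $W^{*}F$.

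For (3.7), the transition between restricted norms requires a brief check: since $W^{*}$ acts only in $x$ and commutes with restriction in $t$, an extension $v$ of $F$ from $[0,\delta]$ to $\mathbb{R}$ yields $W^{*}v$ as an extension of $W^{*}F$, and this correspondence is a bijection, so the infimum defining $\mathcal{X}_{s,b}^{\delta}$ matches the infimum defining $X_{s,b}^{\delta}$ for $W^{*}F$. Thus $\|F\|_{\mathcal{X}_{1,-b}^{\delta}}=\|W^{*}F\|_{X_{1,-b}^{\delta}}\lesssim\delta^{P}\|W^{*}F\|_{X_{1,-\beta}^{\delta}}=\delta^{P}\|F\|_{\mathcal{X}_{1,-\beta}^{\delta}}$, where the middle inequality is the classical time-localization lemma of Bourgain/CKSTT.

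The only substantive thing to verify carefully is that the hypotheses genuinely put us in the regime $P_{c}=I$ with $W$ unitary, so that the intertwining works without correction terms: this is where Beceanu's structure formula (Theorem \ref{thm:Structure}) and the absence of zero resonance from Lemma A.1 play their role. Once that is in hand, the three assertions are essentially formal consequences, and there is no genuine analytic obstacle beyond the bookkeeping of $W$ and $W^{*}$.
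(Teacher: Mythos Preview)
Your proposal is correct and follows essentially the same route as the paper: both reduce all three estimates to their free ($V=0$) analogues in \cite{CKSTT1} via the identity $\|F\|_{\mathcal{X}_{s,b}}=\|W^{*}F\|_{X_{s,b}}$ and the intertwining property $W^{*}e^{-itH}=e^{it\Delta}W^{*}$. If anything, you are slightly more explicit than the paper in justifying $\|W^{*}f\|_{H^{1}}\lesssim\|f\|_{H^{1}}$ and in checking that the bijection $v\leftrightarrow W^{*}v$ of time-extensions transfers the restricted-norm infima, points the paper leaves implicit.
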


\begin{proof}
As we did in the proof of Lemma 3.7, we write
\begin{align*}
\|e^{-itH}f\|_{\mathcal{X}_{s,b}}&=\|W\la\nabla\ra^s\la\tau+\Delta\ra^bW^* \mathcal{F}_t(We^{it\Delta}W^* f)(\tau)\|_{L_{\tau,x}^2}\\
&=\|e^{it\Delta}(W^* f)\|_{X_{s,b}},\\
\Big\|\int_0^t e^{-i(t-s)H} F(s)ds\Big\|_{\mathcal{X}_{s,b}}&=\Big\|W\la\nabla\ra^s\la\tau+\Delta\ra^bW^* \mathcal{F}_t\Big(\int_0^t We^{i(t-s)\Delta}W^* F(s)ds\Big)(\tau)\Big\|_{L_{\tau,x}^2}\\
&=\Big\|\int_0^t e^{i(t-s)\Delta} (W^*F)(s)ds\Big\|_{X_{s,b}}.
\end{align*}
Then, $(3.3)$, $(3.4)$ and $(3.5)$ follow from $(3.16)$, $(3.17)$ and $(3.18)$ of \cite{CKSTT1}. For example, by $(3.17)$ of \cite{CKSTT1} and $(3.2)$, we obtain that
\begin{align*}
\Big\|\int_0^t e^{-i(t-s)H} F(s)ds\Big\|_{\mathcal{X}_{1,1/2+}^\delta}&=\Big\|\int_0^t e^{i(t-s)\Delta} W^*F(s)ds\Big\|_{X_{1, 1/2+}^\delta}\\
&\lesssim\|W^*F\|_{X_{1,-1/2+}^\delta}=\|F\|_{\mathcal{X}_{1,-1/2+}^\delta}.
\end{align*}
\end{proof}

\begin{lemma}[Bilinear estimate involving the $\mathcal{X}_{s,b}^\delta$-norms]
Suppose that $V\in B$ and $\|V_-\|_{\mathcal{K}}<4\pi$. If $\mathcal{P}_{N_1}u_1=u_1$ and $\mathcal{P}_{N_2}u_2=u_2$, then
\begin{equation}
\|u_1u_2\|_{L_{t\in[0,\delta]}^2L_x^2}\lesssim\frac{N_1}{N_2^{1/2}}\|u_1\|_{\mathcal{X}_{0,1/2+}^\delta}\|u_2\|_{\mathcal{X}_{0,1/2+}^\delta}.
\end{equation}
\end{lemma}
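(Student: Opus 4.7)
The plan is to reduce the perturbed bilinear estimate to the classical bilinear Strichartz estimate for the free Schr\"odinger equation (as used in \cite{CKSTT1}) by intertwining with the wave operator. Under the standing hypotheses, $\|V_-\|_{\mathcal K}<4\pi$ excludes zero resonances (Lemma~A.1) and eigenvalues, so $P_c=I$ and the intertwining property $(3.1)$ gives $\mathcal P_{N_i}=WP_{N_i}W^*$. Setting $v_i:=W^*u_i$, we then have $P_{N_i}v_i=v_i$, and the identity $(3.2)$ (which descends to the time-restricted norms since $W$ and $W^*$ are purely spatial, hence commute with restriction in $t$) yields $\|v_i\|_{X^\delta_{0,1/2+}}=\|u_i\|_{\mathcal X^\delta_{0,1/2+}}$. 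It therefore suffices to bound $\|(Wv_1)(Wv_2)\|_{L^2_{[0,\delta]}L^2_x}$ by $\tfrac{N_1}{N_2^{1/2}}\|v_1\|_{X^\delta_{0,1/2+}}\|v_2\|_{X^\delta_{0,1/2+}}$; by the asymmetry of the claimed bound I may assume $N_1\leq N_2$.

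Next, Beceanu's structure formula (Theorem~\ref{thm:Structure}) decomposes $Wv_i=v_i+R_i$ with $R_i(x)=\int_{\mathbb{R}^3}\int_{O(3)} v_i(sx+y)\,dg^{(i)}_{s,y}(x)\,dy$, so that
$$u_1u_2=v_1v_2+v_1R_2+R_1v_2+R_1R_2.$$
For the main piece $v_1v_2$, since $P_{N_i}v_i=v_i$, the classical Bourgain-type bilinear Strichartz estimate from \cite{CKSTT1} immediately gives $\|v_1v_2\|_{L^2_{t,x}}\lesssim\tfrac{N_1}{N_2^{1/2}}\|v_1\|_{X_{0,1/2+}}\|v_2\|_{X_{0,1/2+}}$, which is the desired bound on this piece.

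For a cross term such as $v_1R_2$, I move the spacetime $L^2$ norm inside the $(s,y)$-integration by a generalized Minkowski inequality against the finite measure supplied by Theorem~\ref{thm:Structure}:
$$\|v_1R_2\|_{L^2_{t,x}}\leq\iint\|v_1(x)\,v_2(sx+y)\|_{L^2_{t,x}}\,d\|g^{(2)}_{s,y}\|_{L^\infty_x}\,dy.$$
For each fixed $(s,y)\in O(3)\times\mathbb{R}^3$, the spatial Fourier transform of $v_2(s\cdot+y)$ equals $e^{i(s\xi)\cdot y}\hat v_2(s\xi)$; its modulus is $|\hat v_2(s\xi)|$, so since $|s\xi|=|\xi|$, both the Littlewood-Paley localization at $N_2$ and the $X_{0,1/2+}$-weight $\la\tau+|\xi|^2\ra^{1/2+}$ are preserved. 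The classical bilinear estimate thus applies to $v_1(x)\cdot v_2(sx+y)$ uniformly in $(s,y)$ with the same constant $\tfrac{N_1}{N_2^{1/2}}$, and the finiteness of $\iint d\|g^{(2)}_{s,y}\|_{L^\infty_x}dy$ in Theorem~\ref{thm:Structure} closes the estimate for $v_1R_2$. The terms $R_1v_2$ and $R_1R_2$ are handled identically, applying the classical bilinear estimate once and twice respectively. The main obstacle I anticipate is setting up the generalized Minkowski step cleanly against the $x$-dependent measure $g_{s,y}(x)$; once the $(s,y)$-integration is pulled outside the $L^2_{t,x}$ norm, the rotation/translation invariance of both the frequency support and the $X_{0,1/2+}$-weight reduces each correction to the free-case bilinear estimate, and the bound follows.
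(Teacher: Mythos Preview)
Your proposal is correct and follows essentially the same route as the paper's proof: write $u_i=WW^*u_i$, use the structure formula (Theorem~\ref{thm:Structure}) to decompose $W$ as identity plus an $(s,y)$-integral, expand $u_1u_2$ into four pieces, pull the $L^2_{t,x}$ norm inside the $(s,y)$-integration by Minkowski against the finite total-variation measure, and reduce each piece to the classical bilinear estimate from \cite{CKSTT1} via the rotation/translation invariance of both the Littlewood--Paley localization and the $X_{0,1/2+}$-weight, finally invoking $(3.2)$ to pass between $X^\delta_{0,1/2+}$ and $\mathcal X^\delta_{0,1/2+}$. Your explicit Fourier-side justification of why $v_2(s\cdot+y)$ retains the frequency support and the $X_{0,1/2+}$-norm is a welcome elaboration of a point the paper leaves implicit.
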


\begin{proof}
By the structure formula for the wave operator, we write
$$u_i(x)=(WW^*u_i)(x)=(W^*u_i)(x)+(\widetilde{W^*u_i})(x),$$
where
$$(\widetilde{W^*u_i})(x)=\int_{\mathbb{R}^3}\int_{O(3)} (W^*u_i)(sx+y) d g_{s,y}(x)dy.$$
Thus,
\begin{align*}
\|u_1u_2\|_{L_{t\in[0,\delta]}^2L_x^2}&\leq\|(W^*u_1)(W^*u_2)\|_{L_{t\in[0,\delta]}^2L_x^2}+\|(\widetilde{W^*u_1})(W^*u_2)\|_{L_{t\in[0,\delta]}^2L_x^2}\\
&+\|(W^*u_1)(\widetilde{W^*u_2})\|_{L_{t\in[0,\delta]}^2L_x^2}+\|(\widetilde{W^*u_1})(\widetilde{W^*u_2})\|_{L_{t\in[0,\delta]}^2L_x^2}.
\end{align*}
For example, consider
\begin{equation}
\|(\widetilde{W^*u_1})(W^*u_2)\|_{L_{t\in[0,\delta]}^2L_x^2}.
\end{equation}
By the Minkowski inequality, it is bounded by
$$\int_{\mathbb{R}^3}\int_{O(3)}\|(W^*u_1)(sx+y)(W^*u_2)\|_{L_{t\in[0,\delta]}^2L_x^2}d\|g_{s,y}(x)\|_{L_x^\infty}dy.$$
Observe that by the intertwining property, $W^*u_i=W^*\mathcal{P}_{N_i}u_i=P_{N_i}W^*u_i$ is localized in $|\xi|\sim N_i$ in frequency. Thus, applying the bilinear estimate in the homogeneous case (Lemma 2.1 of \cite{CKSTT1}) and $(3.2)$, we get
\begin{align*}
&\|(W^*u_1)(sx+y)W^*u_2(x)\|_{L_{t\in[0,\delta]}^2L_x^2}\lesssim\frac{N_1}{N_2^{1/2}}\|(W^*u_1)(s\cdot+y)\|_{X_{0,1/2+}^\delta}\|W^*u_2\|_{X_{0,1/2+}^\delta}\\
&=\frac{N_1}{N_2^{1/2}}\|W^*u_1\|_{X_{0,1/2+}^\delta}\|W^*u_2\|_{X_{0,1/2+}^\delta}=\frac{N_1}{N_2^{1/2}}\|u_1\|_{\mathcal{X}_{0,1/2+}^\delta}\|u_2\|_{\mathcal{X}_{0,1/2+}^\delta}.
\end{align*}
We thus conclude that 
\begin{align*}
(3.7)\lesssim\int_{\mathbb{R}^3}\int_{O(3)}\frac{N_1}{N_2^{1/2}}\|u_1\|_{\mathcal{X}_{0,1/2+}^\delta}\|u_2\|_{\mathcal{X}_{0,1/2+}^\delta}d \|g_{s,y}(x)\|_{L_x^\infty} dy\lesssim\frac{N_1}{N_2^{1/2}}\|u_1\|_{\mathcal{X}_{0,1/2+}^\delta}\|u_2\|_{\mathcal{X}_{0,1/2+}^\delta}.
\end{align*}
By the same way, we estimate other terms.
\end{proof}

\begin{remark}[Sharp constants for scaled Schr\"odinger operators]
The sharp constants for all of the above estimates are invariant under the scaling $V\mapsto V_r:=\frac{1}{r^2}V(\frac{\cdot}{r})$. For example, let $C(V)$ is the sharp constant for the Sobolev inequality (Lemma 3.6). Then, by the Stone's formula for $H^{-\frac{s}{2}}$ and scaling, it is easy to check that 
$$\|(-\Delta+V)^{-\frac{s}{2}}f(r\cdot)\|_{L^q}\leq C(V)\|f(r\cdot)\|_{L^p}\Longleftrightarrow \|(-\Delta+V_r)^{-\frac{s}{2}}f\|_{L^q}\leq C(V_r)\|f\|_{L^p},$$
which implies $C(V)=C(V_r)$.
\end{remark}

\section{Almost Conservation Law $\Rightarrow$ Global Well-posedness}

\subsection{Perturbed $I$-operator}
For large $N\gg1$, let $m_N:(0,+\infty)\to\mathbb{R}$ be a smooth non-increasing function such that
\begin{equation*}
m_N(\lambda):=\left\{\begin{aligned}
&1&&\textup{ for }\lambda\in(0,N)\\
&\sum_{M\in 2^{\mathbb{Z}}}\Big(\frac{N}{M}\Big)^{1-s}\chi_M&&\textup{ for }\lambda\in(2N,+\infty),
\end{aligned}
\right.
\end{equation*}
where $\chi_M$ is a dyadic partition of unity given in Section 1.4. Due to the technical issue in Section 5.5, we define the \textit{discrete} perturbed $I$-operators by
$$\mathcal{I}=\mathcal{I}_N:=m_N(\sqrt{H})=\sum_{M\in 2^{\mathbb{Z}}}m_N(M)\mathcal{P}_M.$$

The energy of $\mathcal{I}u$ is comparable to $\|u\|_{H^s}$ in the following sense:
\begin{lemma} Assume that $V\in B\cap L^\infty$ and $\|V_-\|_{\mathcal{K}}<4\pi$. Let $s\in(\frac{5}{6},1)$. Then, 
\begin{align}
E[\mathcal{I}u]&\lesssim N^{2-2s}(1+\|u\|_{H^s}^4),\\
\|u\|_{H^s}^2&\lesssim E[\mathcal{I}u]+\|u\|_{L^2}^2.
\end{align}
\end{lemma}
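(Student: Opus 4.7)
The plan is to reduce both inequalities to elementary pointwise comparisons between the symbols $M \mapsto M\,m_N(M)$ and $\langle M\rangle^s$ on the three dyadic ranges $M \leq N$, $N < M < 2N$, and $M \geq 2N$, combined with the approximate spectral orthogonality of the projections $\mathcal{P}_M$ in $L^2$ and the norm equivalence of Lemma 3.5. The quartic term in $E[\mathcal{I}u]$ is treated separately via the $L^4$-boundedness of $\mathcal{I}$ followed by Sobolev embedding.

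For $(4.1)$, split $E[\mathcal{I}u] = \tfrac{1}{2}\|H^{1/2}\mathcal{I}u\|_{L^2}^2 + \tfrac{1}{4}\|\mathcal{I}u\|_{L^4}^4$. Since the $\chi_M$'s have bounded overlap, the spectral calculus gives
\[
\|H^{1/2}\mathcal{I}u\|_{L^2}^2 \;\sim\; \sum_{M \in 2^{\mathbb{Z}}} M^2\, m_N(M)^2\, \|\mathcal{P}_M u\|_{L^2}^2.
\]
Case analysis shows $M\, m_N(M) \lesssim N^{1-s}\langle M\rangle^s$: for $M \leq N$, $M^{1-s} \leq N^{1-s}$ gives $M \leq N^{1-s}M^s \leq N^{1-s}\langle M\rangle^s$; for $M \geq 2N$, $M\, m_N(M) = N^{1-s}M^s$ exactly; the intermediate window is trivial. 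Inserting this bound and re-invoking spectral orthogonality together with Lemma 3.5 bounds the sum by $N^{2-2s}\|u\|_{H^s}^2$. For the $L^4$ piece, $m_N$ is a symbol of order zero uniformly in $N$, so Lemma 3.4 yields $\|\mathcal{I}u\|_{L^4} \lesssim \|u\|_{L^4}$; since $s > \tfrac{5}{6} > \tfrac{3}{4}$, the Sobolev embedding $H^{3/4}(\mathbb{R}^3) \hookrightarrow L^4$ gives $\|u\|_{L^4} \lesssim \|u\|_{H^s}$, hence $\|\mathcal{I}u\|_{L^4}^4 \lesssim \|u\|_{H^s}^4$. Combining both pieces and using $N^{2-2s} \geq 1$ along with $\|u\|_{H^s}^2 \leq \tfrac{1}{2}(1+\|u\|_{H^s}^4)$ yields $(4.1)$.

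For $(4.2)$, discard the nonnegative quartic so that $E[\mathcal{I}u] \geq \tfrac{1}{2}\|H^{1/2}\mathcal{I}u\|_{L^2}^2$ and use the same spectral identity. The pointwise inequality required now is $\langle M\rangle^{2s} \lesssim M^2\, m_N(M)^2 + 1$: for $1 \leq M \leq N$, $M^{2s} \leq M^2 = M^2 m_N(M)^2$ since $s \leq 1$; for $M \geq 2N$, $M^2 m_N(M)^2 = N^{2-2s}M^{2s} \geq M^{2s}$ since $N \geq 1$; and the additive $+1$ handles $M \lesssim 1$. Summing against $\|\mathcal{P}_M u\|_{L^2}^2$ and reapplying Lemma 3.5 yields $\|u\|_{H^s}^2 \lesssim \|H^{1/2}\mathcal{I}u\|_{L^2}^2 + \|u\|_{L^2}^2 \lesssim E[\mathcal{I}u] + \|u\|_{L^2}^2$. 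There is no conceptual obstacle; the only mild subtlety is the uniform-in-$N$ verification that $m_N$ satisfies the symbol-of-order-zero hypothesis of Lemma 3.4 needed for the $L^4$-bound on $\mathcal{I}$, which is routine since $|m_N^{(k)}(\lambda)| \lesssim \lambda^{-k}$ uniformly in $N$ for $\lambda > 0$.
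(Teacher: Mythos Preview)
Your proof is correct and follows essentially the same approach as the paper. The only presentational difference is that the paper packages the symbol comparisons into single applications of the spectral multiplier theorem to composite operators (e.g., bounding $H^{(s-1)/2}\mathcal{I}^{-1}$ directly on $L^2$ for $(4.2)$), whereas you unfold the same idea into an explicit dyadic sum $\sum_M M^2 m_N(M)^2\|\mathcal{P}_M u\|_{L^2}^2$ and compare coefficients pointwise; both routes rest on the same two ingredients, Lemma~3.4 and Lemma~3.5.
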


\begin{proof}
For $(4.1)$, by the spectral multiplier theorem and the norm equivalence, we obtain
$$\|H^{1/2}\mathcal{I}u\|_{L^2}^2\leq\|H^{1/2}\mathcal{P}_{\leq N}\mathcal{I}u\|_{L^2}^2+\|H^{1/2}\mathcal{P}_{\geq N}\mathcal{I}u\|_{L^2}^2\lesssim N^{2(1-s)}\|H^{\frac{s}{2}}u\|_{L^2}^2\lesssim N^{2(1-s)}\|u\|_{\dot{H}^s}^2.$$
For the nonlinear term, by the spectral multiplier theorem and the Sobolev inequality
$$\|\mathcal{I}u\|_{L^4}\leq \|u\|_{L^4}\lesssim\|u\|_{H^s}.$$
For $(4.2)$, by the norm equivalence and the spectral multiplier theorem to $H^{\frac{s-1}{2}}\mathcal{I}^{-1}$, we get
$$\|u\|_{\dot{H}^s}^2\sim\|H^{\frac{s}{2}}u\|_{L^2}^2=\|H^{\frac{s-1}{2}}\mathcal{I}^{-1}H^{1/2}\mathcal{I}u\|_{L^2}^2\lesssim\|H^{1/2}\mathcal{I}u\|_{L^2}^2\leq 2E[\mathcal{I}u].$$
\end{proof}

We define the standard $I$-operators by $I=I_N:=m_N(\sqrt{-\Delta})$. As an application of Lemma \ref{Approximation1} and \ref{Approximation2}, we prove the following approximation lemma:
\begin{lemma}[Approximation of  $\mathcal{I}$] If $V\in B\cap L^\infty$ and $\|V_-\|_{\mathcal{K}}<4\pi$, then for $\beta\in[0,1]$ and $N\gg1$, the difference $\tilde{\mathcal{I}}:=\mathcal{I}-I$ obeys
\begin{equation}
\||\nabla|^\beta\tilde{\mathcal{I}}\|_{L^p\to L^p}\lesssim N^{-2+\beta},\ 1<p<\infty.
\end{equation}
\end{lemma}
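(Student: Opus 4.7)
The plan is to reduce matters to a dyadic sum of the high-frequency discrepancies $\mathfrak{D}_{\chi, M} = \chi_M(\sqrt{-\Delta}) - \chi_M(\sqrt{H})$, to which Lemmas 2.1 and 2.2 apply directly. The obstacle with attacking $\tilde{\mathcal{I}} = \mathcal{I} - I$ head-on is that the naive expansion $\tilde{\mathcal{I}} = \sum_M m_N(M)(\mathcal{P}_M - P_M)$ contains nontrivial contributions from low dyadic scales $M \leq N/2$, where $m_N(M) = 1$, and no $N$-decay is available from Section 2 at these scales.

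To circumvent this, I would set $\tilde{m}_N := m_N - 1$, so that $\tilde{m}_N(M) = 0$ for every dyadic $M \leq N/2$ while $|\tilde{m}_N(M)| \leq 1$ at every dyadic scale. Under the standing hypotheses, $\|V_-\|_{\mathcal{K}} < 4\pi$ makes the quadratic form of $H$ coercive (ruling out negative or zero eigenvalues), while Lemma A.1 forbids a zero resonance; in particular $H$ has no bound states, so
$$\sum_{M \in 2^{\mathbb{Z}}} \mathcal{P}_M \;=\; \sum_{M \in 2^{\mathbb{Z}}} P_M \;=\; \mathrm{Id}.$$
Subtracting these two identities of spectral unity allows me to replace the coefficient $m_N(M)$ by $\tilde{m}_N(M)$ without changing the operator:
$$\tilde{\mathcal{I}} \;=\; \sum_{M} m_N(M)(\mathcal{P}_M - P_M) \;=\; \sum_{M} \tilde{m}_N(M)(\mathcal{P}_M - P_M) \;=\; -\sum_{M \geq N} \tilde{m}_N(M)\,\mathfrak{D}_{\chi, M}.$$

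Since $V \in L^\infty$ corresponds to the limiting exponent $\alpha = 2$ in Lemmas 2.1 and 2.2, every dyadic scale $M \geq N \gg 1$ in this sum satisfies
$$\|\,|\nabla|^\beta \mathfrak{D}_{\chi, M}\|_{L^p \to L^p} \;\lesssim\; M^{-2+\beta}, \qquad 1 < p < \infty,$$
for $\beta \in [0,1]$ (Lemma 2.1 at $\beta = 0$, Lemma 2.2 for $\beta \in (0,1]$). The triangle inequality together with $|\tilde{m}_N| \leq 1$ and $\beta \leq 1 < 2$ then yields
$$\|\,|\nabla|^\beta \tilde{\mathcal{I}}\|_{L^p \to L^p} \;\lesssim\; \sum_{M \geq N} M^{-2+\beta} \;\lesssim\; N^{-2+\beta}$$
by geometric summation, which is the claim.

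The main delicate point is the spectral-completeness identity $\sum_M \mathcal{P}_M = \mathrm{Id}$: this is what makes the low-scale terms cancel and converts the coefficient $m_N$ into its zero-at-low-frequency variant $\tilde m_N$. Once that identity is in hand (via coercivity plus Lemma A.1), the rest is a routine geometric sum of the scale-by-scale estimates from Section 2.
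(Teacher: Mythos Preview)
Your proof is correct and follows essentially the same route as the paper's: both exploit that $1(\sqrt{H}) = 1(\sqrt{-\Delta}) = \mathrm{Id}$ (which you phrase as $\sum_M \mathcal{P}_M = \sum_M P_M = \mathrm{Id}$) to trade the symbol $m_N$ for $1 - m_N$, thereby killing the low-frequency contributions, and then sum the remaining high-frequency dyadic discrepancies via Lemmas~2.1 and~2.2 with $\alpha = 2$. The only cosmetic difference is that the paper packages each summand as $\mathfrak{D}_{(1-m_N)\chi_M}$ whereas you factor out the scalar $\tilde m_N(M)$ and work with $\mathfrak{D}_{\chi,M}$; under the discrete definition of $\mathcal{I}$ these coincide.
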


\begin{proof}
Since both $1(H)$ and $1(-\Delta)$ are identity maps on $L^2\cap L^p$, we write
\begin{align*}
\tilde{\mathcal{I}}_N=\mathcal{I}_N-I_N&=m_N(\sqrt{H})-m_N(\sqrt{-\Delta})=(1-m_N)(\sqrt{H})-(1-m_N)(\sqrt{-\Delta})\\
&=\sum_{M\in 2^{\mathbb{Z}}}((1-m_N)\chi_M)(\sqrt{H})-((1-m_N)\chi_M)(\sqrt{-\Delta})=\sum_{M\in 2^{\mathbb{Z}}}\mathfrak{D}_{(1-m_N)\chi_M}.
\end{align*}
It follows from Lemma \ref{Approximation1} and \ref{Approximation2} that 
$$\||\nabla|^\beta \mathfrak{D}_{(1-m_N)\chi_M}\|_{L^p\to L^p}\lesssim\left\{\begin{aligned}
&0&&\textup{ if }M\leq N\\
&M^{-2+\beta}&&\textup{ if }M>N.
\end{aligned}
\right.$$
Summing in $M$, we complete the proof.
\end{proof}

\subsection{Almost conservation law $\Rightarrow$ Global well-posedness}
In the next section, we will show that the energy of $\mathcal{I}u$ is almost conserved:
\begin{proposition}[Almost conservation law]\label{prop:AlmostConservation} Suppose that $V\in B\cap L^\infty$ and $\|V_-\|_{\mathcal{K}}<4\pi$. Let $s\in(\frac{5}{6},1)$. There exists a uniform $\delta\in(0,1]$ such that if $E[\mathcal{I}_Nu_0]\leq1$, then the solution $u(t)$ obeys
\begin{equation}
E[\mathcal{I}_Nu](t)=E[\mathcal{I}_Nu_0]+O(N^{-1+}),\textup{ for }t\in[0,\delta].
\end{equation}
\end{proposition}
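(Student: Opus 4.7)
I would begin by differentiating $E[\mathcal{I}u]$ along the flow. Since $\mathcal{I}$ is a spectral multiplier of $H$, it commutes with $H$, so applying $\mathcal{I}$ to the equation yields $i(\mathcal{I}u)_t = H(\mathcal{I}u) + \mathcal{I}(|u|^2 u)$. Writing $w := \mathcal{I}u$, using the self-adjointness of $H$ in $\tfrac{d}{dt}\tfrac{1}{2}\la Hw, w\ra = \Re\la Hw, w_t\ra$ together with the analogous identity for the quartic term, a short computation produces the pointwise-in-$t$ identity
$$\frac{d}{dt}E[\mathcal{I}u] = -\Im\bigl\la H\mathcal{I}u + |\mathcal{I}u|^2\mathcal{I}u,\ \mathcal{I}(|u|^2 u) - |\mathcal{I}u|^2\mathcal{I}u\bigr\ra_{L^2}.$$
Integrating over $[0,\delta]$, the proposition reduces to bounding this space-time pairing by $N^{-1+}$.

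\textbf{Reduction to the homogeneous setting.} The main structural obstacle is that the spectral multiplier $\mathcal{I}$ lacks convolution structure, so CKSTT's multilinear Fourier analysis is not directly available. I would circumvent this by writing $\mathcal{I} = I + \tilde{\mathcal{I}}$ and expanding
$$\mathcal{I}(u_1 u_2 u_3) - (\mathcal{I}u_1)(\mathcal{I}u_2)(\mathcal{I}u_3) = \bigl[I(u_1 u_2 u_3) - (Iu_1)(Iu_2)(Iu_3)\bigr] + \mathcal{E},$$
where every summand in $\mathcal{E}$ contains at least one factor of $\tilde{\mathcal{I}}$ and hence carries extra smoothing of order $N^{-2+\beta}$ by Lemma 4.2 --- more than enough to absorb any derivative loss in the subsequent pairing. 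For the outer factor I would use the norm equivalence (Lemma 3.5) together with Corollary 2.3 to swap $H\mathcal{I}u$ for $|\nabla|^2 Iu$ modulo harmless remainders, reducing the main contribution to an expression that is morally the classical CKSTT commutator.

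\textbf{Multilinear estimates.} Each piece of the resulting $I$-commutator would be handled by dyadic paraproduct decomposition using the \emph{perturbed} Littlewood-Paley projections $\mathcal{P}_N$. A second application of Lemma 4.2 converts each $\mathcal{P}_N$ into a standard $P_N$ up to a small error, after which the perturbed bilinear estimate (Lemma 3.10), the Strichartz estimates (Lemma 3.7), and the $\mathcal{X}_{s,b}$ machinery (Lemma 3.9) --- crucially the short-time gain $\delta^P$ from $(3.5)$ --- are directly available. Summing dyadically over frequency assignments, the worst case yields a factor $N^{-1+}$ precisely at the threshold $s > \tfrac{5}{6}$, mirroring the CKSTT count. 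The hypothesis $E[\mathcal{I}u_0] \leq 1$ would be upgraded to a uniform $\mathcal{X}_{1,1/2+}^\delta$-bound on $\mathcal{I}u$ over some interval $[0,\delta]$ by a bootstrap/continuity argument rerun at the level of the smoothed variable, using $(3.3)$--$(3.5)$ and the coercivity of $E$ provided by $\|V_-\|_{\mathcal{K}} < 4\pi$.

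\textbf{Main obstacle.} The principal difficulty is the multilinear commutator associated with a \emph{spectral} rather than a Fourier multiplier; the systematic $\mathcal{I} = I + \tilde{\mathcal{I}}$ splitting, coupled with the high-frequency approximation Lemma 4.2, is what keeps the homogeneous $I$-method analysis applicable. The most delicate bookkeeping arises in the mixed $I/\tilde{\mathcal{I}}$ terms produced by the cubic expansion and in the errors generated when interchanging $\mathcal{P}_N$ with $P_N$ inside trilinear estimates. Everything else is a careful transcription of the $s > \tfrac{5}{6}$ argument of CKSTT into the perturbed $\mathcal{X}_{s,b}$ framework.
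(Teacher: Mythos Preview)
Your plan is correct in outline and matches the paper's strategy: differentiate $E[\mathcal{I}u]$, reduce the spectral commutator $\mathcal{I}(|u|^2u)-|\mathcal{I}u|^2\mathcal{I}u$ to a Fourier-multiplier commutator via the splitting $\mathcal{I}=I+\tilde{\mathcal{I}}$ and Lemma~4.2, then run a CKSTT-type dyadic analysis with a priori bounds coming from a local-theory bootstrap (the paper's Proposition~5.1). The paper's approximation is organized slightly differently---it replaces $u$ by $I^{-1}\mathcal{I}u$, writing $\mathcal{I}(|u|^2u)\approx I\bigl(|I^{-1}\mathcal{I}u|^2(I^{-1}\mathcal{I}u)\bigr)$, so that $\mathcal{I}u$ (not $Iu$) remains the basic variable throughout; your direct expansion is equivalent since $I^{-1}\tilde{\mathcal{I}}$ carries the same $N^{-2}$ smoothing.

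Where your sketch is thin is the ``careful transcription'' step, which the paper treats as the heart of the matter and does \emph{not} handle by quoting CKSTT. Two devices you do not mention are essential. First, because the $\mathcal{X}_{s,b}$-norm is not Fourier-defined, one cannot simply pull the symbol $1-\tfrac{m(\xi_1)}{m(\xi_2)m(\xi_3)m(\xi_4)}$ out of the dyadic integrals as in CKSTT; the paper fixes this by \emph{discretizing} $m_N$ (this is why $\mathcal{I}$ is defined as $\sum_M m_N(M)\mathcal{P}_M$) and introducing auxiliary projections $Q_M$ so that $I^{-1}=\sum_M m(M)^{-1}Q_M$, turning the symbol into a genuine constant on each piece. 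Second, the paper does \emph{not} use the bilinear estimate (Lemma~3.9) in the multilinear step---the dyadic pieces are bounded purely by H\"older and Strichartz, with exponents modified from CKSTT so that the norm equivalence $\|H^{1/2}f\|_{L^r}\sim\|\nabla f\|_{L^r}$ (valid only for $r<3$) is always applicable. The $P_N\leftrightarrow\mathcal{P}_N$ conversion you mention runs in the opposite direction from what you suggest: the decomposition is in standard Fourier (because $I$ is), and one passes to $\mathcal{P}_N$ only at the end (via Corollary~2.3) to sum the diagonal case $N_1\sim N_2$ by Cauchy--Schwarz in $\mathcal{X}_{0,1/2+}^\delta$.
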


Now we will prove the main theorem, assuming the almost conservation law.

\begin{proof}[Proof of Theorem \ref{thm:MainTheorem}, assuming Proposition \ref{prop:AlmostConservation}]
Let $r\gg 1$ to be chosen later. We denote $H_r:=-\Delta+V_r$, $V_r=\frac{1}{r^2}V(\frac{\cdot}{r})$, $\mathcal{I}_{N,r}:=m_N(\sqrt{H_r})$, $u_r(t,x):=\frac{1}{r}u(\frac{t}{r},\frac{x}{r})$ and $u_{r,0}:=\tfrac{1}{r}u_0(\tfrac{\cdot}{r})$. Then $u_r(t,x)$ solves
$$i\partial_tu_r-H_r u_r-|u_r|^2u_r=0;\ u_r(0)=u_{r,0}.$$
Observe that 
\begin{equation}\label{eq:claim1}
E_{V_r}[\mathcal{I}_{N, r}u_{r,0}]\leq C_0N^{2-2s}r^{1-2s}(1+\|u_0\|_{H^s})^4.
\end{equation}
Indeed, by the argument in the proof of Lemma 4.1, we have
\begin{align*}
\|(H_r)^{\frac{1}{2}}\mathcal{I}_{N, r}u_{r,0}\|_{L^2}^2&\lesssim N^{2-2s}\|u_{r,0}\|_{\dot{H}^s}^2=N^{2-2s}r^{1-2s}\|u_0\|_{\dot{H}^s}^2\\
\|\mathcal{I}_{N, r}u_{r,0}\|_{L^4}^4&\lesssim \|u_{r,0}\|_{L^4}^4=r^{-1}\|u_0\|_{L^4}^4\lesssim r^{-1}\|u_0\|_{H^s}^4\leq r^{1-2s}\|u_0\|_{H^s}^4.
\end{align*}
Now we choose 
$$r=\Big(\frac{1}{2C_0}\Big)^{\frac{1}{1-2s}} N^{\frac{2s-2}{1-2s}}(1+\|u_0\|_{\dot{H}^s})^{-\frac{4}{1-2s}}\Rightarrow E_{V_r}[\mathcal{I}_{N, r}u_{r,0}]\leq\frac{1}{2}\textup{ (by $(\ref{eq:claim1})$)}.$$
Then it follows from Proposition \ref{prop:AlmostConservation} that there exists $C_1>0$ such that
\begin{equation}
E_{V_r}[\mathcal{I}_{N, r}u_{r}(C_1N^{1-}\delta)]\sim1.
\end{equation}
Note that $C_1$, $\delta$ and the implicit constant in $(4.6)$ are independent of scaling. Indeed, $\|V_r\|_{B}$ is invariant and $\|V_r\|_{L^\infty}$ is uniformly bounded in $r\gg1$. As mentioned in Remark 3.10, the sharp constants for all the estimates used in the proof of Proposition \ref{prop:AlmostConservation} will not depend on scaling $V\mapsto V_r$.

Take $T_0=\frac{C_1N^{1-}\delta}{r^2}\sim N^{\frac{5-6s-}{1-2s}}$. Unscaling $u_r$, we prove that the energy of $\mathcal{I}_N u$ grows at most polynomially in time:
$$E[\mathcal{I}_Nu(T_0)]=r E_{V_r}[\mathcal{I}_{N, r}u_{r}(r^2 T_0)]\sim r\lesssim N^{\frac{2s-2}{1-2s}}\lesssim T_0^{\frac{1-s+}{3(s-\frac{5}{6})}}.$$
By $(4.2)$ and mass conservation, $\|u(t)\|_{H^s}$ also grows at most polynomially in time.
\end{proof}

\section{Proof of Almost Conservation Law: Proposition \ref{prop:AlmostConservation}}

\subsection{Time interval}
We begin by choosing a short time interval $[0,\delta]$ in Proposition \ref{prop:AlmostConservation}.
\begin{proposition}[Uniform interval]
Assume that $V\in B\cap L^\infty$, $\|V_-\|_{\mathcal{K}}<4\pi$ and $\frac{5}{6}<s<1$. Then there exists $\delta>0$ such that if $E[\mathcal{I}u_0]\leq1$, the solution $u$ to $\textup{NLS}_V$ with initial data $u_0$ exists on the interval $[0,\delta]$ and it obeys 
\begin{equation}
\|H^{1/2}\mathcal{I}u\|_{\mathcal{X}_{0,1/2+}^\delta}\lesssim1.
\end{equation} 
\end{proposition}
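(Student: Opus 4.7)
The plan is to run a standard $X_{s,b}$-type contraction argument for the perturbed Duhamel formula $(1.2)$ applied to $\mathcal{I}u$. Since $\mathcal{I}=m_N(\sqrt{H})$ is a spectral multiplier, it commutes with $e^{-itH}$, so
$$\mathcal{I} u(t)=e^{-itH}\mathcal{I} u_0-i\int_0^t e^{-i(t-s)H}\,\mathcal{I}(|u|^2 u)(s)\,ds.$$
Applying the linear estimates $(3.3)$ and $(3.4)$ of Lemma 3.8 gives
$$\|\mathcal{I}u\|_{\mathcal{X}_{1,1/2+}^\delta}\lesssim\|\mathcal{I}u_0\|_{H^1}+\|\mathcal{I}(|u|^2u)\|_{\mathcal{X}_{1,-1/2+}^\delta},$$
which controls $\|H^{1/2}\mathcal{I}u\|_{\mathcal{X}_{0,1/2+}^\delta}$ by the definition of $\|\cdot\|_{\mathcal{X}_{1,1/2+}}$. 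The linear piece is immediately bounded via the coercivity of the energy (guaranteed by $\|V_-\|_{\mathcal{K}}<4\pi$, as in Lemma A.1) together with mass conservation and $(4.2)$: $\|\mathcal{I}u_0\|_{H^1}^2\lesssim E[\mathcal{I}u_0]+\|u_0\|_{L^2}^2\lesssim 1$.

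The core of the proof is a trilinear estimate of the form
$$\|\mathcal{I}(u_1u_2u_3)\|_{\mathcal{X}_{1,-1/2+}^\delta}\lesssim\delta^P\prod_{i=1}^3\|\mathcal{I}u_i\|_{\mathcal{X}_{1,1/2+}^\delta}$$
for some $P>0$, where the extra factor $\delta^P$ is created by invoking $(3.5)$ to go from the endpoint exponent $b=\tfrac12$ to a slightly smaller $\beta$. To prove this I would first dyadically decompose each factor by perturbed Littlewood--Paley projections $\mathcal{P}_{N_i}$, then use the intertwining identity $\mathcal{P}_N=W P_N W^*$ and the Beceanu structure formula (in the same spirit as Lemma 3.9) to transfer each factor to the flat Fourier side. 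On that side, the Fourier-multiplier analogue reduces to the CKSTT trilinear estimate applied to $I(v_1v_2v_3)$ via the bilinear estimate Lemma 3.9 and the usual frequency-interaction case split (high-high paired into low, etc.). The approximation Lemma 4.2 allows one to replace $\mathcal{I}$ by $I$ up to an $O(N^{-1})$ error that is swept into the same estimate via Strichartz (Lemma 3.7) and the Sobolev embedding (Lemma 3.6).

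Once the trilinear estimate is in hand, a Picard iteration in the ball
$$\mathcal{B}_\delta=\bigl\{u:\|\mathcal{I}u\|_{\mathcal{X}_{1,1/2+}^\delta}\leq 2C_0\bigr\}$$
is a contraction provided $\delta$ is chosen small enough that $\delta^P(2C_0)^3$ is strictly less than $C_0$. Since the implicit constants in the linear, bilinear, and trilinear estimates depend only on $\|V\|_{B\cap L^\infty}$, $\|V_-\|_{\mathcal{K}}$, $\|u_0\|_{L^2}$ and $s$ (and, by Remark 3.10, are invariant under the scaling $V\mapsto V_r$), the value of $\delta$ can be chosen uniformly, which is what is needed in the unscaling argument of Theorem 1.1.

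The main obstacle is the trilinear estimate. In CKSTT it is proved by a delicate case analysis that exploits both the pointwise monotonicity of the Fourier symbol $m_N$ and the fact that the flat Littlewood--Paley projections localize frequencies of products in a transparent way. Here the perturbed projections $\mathcal{P}_N$ have no such product-friendly localization, so the argument must be pushed through the intertwining identity factor-by-factor; each factor contributes a nonlocal kernel $g_{s,y}$, and controlling the resulting triple integral (rather than the double integral appearing in the proof of Lemma 3.9) while preserving the CKSTT frequency case analysis is the step that requires care.
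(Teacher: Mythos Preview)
Your Duhamel-plus-contraction framework matches the paper's, but two points diverge. First, by working in the inhomogeneous norm $\mathcal{X}_{1,1/2+}^\delta$ you make the linear bound (and hence $\delta$) depend on $\|u_0\|_{L^2}$; under the rescaling in \S4.2 one has $\|u_{r,0}\|_{L^2}^2=r\|u_0\|_{L^2}^2\to\infty$, so your $\delta$ would not be uniform in $r$, contrary to your last paragraph. The paper avoids this by iterating in the homogeneous quantity $\|H^{1/2}\mathcal{I}u\|_{\mathcal{X}_{0,1/2+}^\delta}$, whose initial value is controlled directly by $E[\mathcal{I}u_0]$ alone.

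Second, and more substantially, the paper does \emph{not} prove the trilinear estimate via the bilinear estimate (Lemma~3.9), the structure formula, or a CKSTT frequency case split. Instead it passes, by interpolation/duality, from $\mathcal{X}_{0,-1/4}^\delta$ to the physical-space norm $L_t^{3/2}L_x^2$, reducing matters to $\|H^{1/2}\mathcal{I}(|u|^2u)\|_{L_t^{3/2}L_x^2}\lesssim\|H^{1/2}\mathcal{I}u\|_{\mathcal{X}_{0,1/2+}^\delta}^3$. The key input is the auxiliary Strichartz-type bound $\|u\|_{L_t^{12}L_x^6}\lesssim\|H^{1/2}\mathcal{I}u\|_{\mathcal{X}_{0,1/2+}^\delta}$ (Lemma~5.2). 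For $\mathcal{P}_{\le1}$ one discards $H^{1/2}\mathcal{I}$ and uses H\"older; for $\mathcal{P}_{>1}$ one uses the high-frequency approximation Lemmas~2.1--2.2 dyadically to replace $H^{1/2}\mathcal{I}\mathcal{P}_M$ by $|\nabla|IP_M$ with summable $O(M^{-1})$ error, then applies the ordinary Leibniz rule for $|\nabla|I$ and closes in $L_t^{12}L_x^6$ and $L_t^2L_x^6$. No products are ever pushed through the wave-operator structure formula, so the ``triple-integral'' obstacle you flag never arises; that machinery is reserved for the almost-conservation step in \S5.5--5.6.
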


For the proof, we need the following lemma. For notational convenience, we omit the time interval $[0,\delta]$ in the norm $\|\cdot\|_{L_{t\in[0,\delta]}^p}$ if there is no confusion.

\begin{lemma}
Assume that $V\in B\cap L^\infty$, $\|V_-\|_{\mathcal{K}}<4\pi$ and $\frac{5}{6}<s<1$. Then, for $\delta>0$,
\begin{equation}
\|u\|_{L_{t\in[0,\delta]}^{12}L_x^6}\lesssim \|H^{1/2}\mathcal{I}u\|_{\mathcal{X}_{0,1/2+}^\delta}.
\end{equation}
\end{lemma}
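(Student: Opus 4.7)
The plan is to reduce $\|u\|_{L_t^{12}L_x^6}$ to $\|H^{1/2}\mathcal{I}u\|_{\mathcal X_{0,1/2+}^\delta}$ by chaining four ingredients: a Sobolev embedding in space that trades an $L_x^{9/4}$ norm for an $L_x^6$ norm at the cost of $\tfrac{5}{6}$ derivatives, the norm equivalence of Lemma 3.5 to convert between Euclidean and perturbed Sobolev regularity, the Strichartz estimate of Lemma 3.7 for a Schr\"odinger-admissible pair whose spatial exponent is $\tfrac{9}{4}$, and finally the bound for a bounded spectral multiplier of $\sqrt H$.

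First, I would note that the pair $(q,r)=(12,\tfrac{9}{4})$ is admissible, since $\tfrac{2}{12}+\tfrac{3}{9/4}=\tfrac{1}{6}+\tfrac{4}{3}=\tfrac{3}{2}$, and that the Sobolev embedding in $\mathbb{R}^3$ yields $\|f\|_{L^6}\lesssim\||\nabla|^{5/6}f\|_{L^{9/4}}$ because $\tfrac{1}{6}=\tfrac{4}{9}-\tfrac{1}{3}\cdot\tfrac{5}{6}$. Applied pointwise in time this gives $\|u\|_{L_t^{12}L_x^6}\lesssim\||\nabla|^{5/6}u\|_{L_t^{12}L_x^{9/4}}$. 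Lemma 3.5 with $s=\tfrac{5}{6}$ and $r=\tfrac{9}{4}<\tfrac{3}{s}=\tfrac{18}{5}$ then identifies $\||\nabla|^{5/6}u\|_{L^{9/4}}\sim\|H^{5/12}u\|_{L^{9/4}}$ pointwise in $t$.

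Next, Strichartz (Lemma 3.7) applied with the admissible pair $(12,\tfrac{9}{4})$ to $F=H^{5/12}u$ gives
$$\|H^{5/12}u\|_{L_t^{12}L_x^{9/4}}\lesssim\|H^{5/12}u\|_{\mathcal X_{0,1/2+}^\delta},$$
since $H^{5/12}$, being a spectral multiplier of $H$, commutes with the temporal weight $\langle\tau-H\rangle^{1/2+}$ that enters the $\mathcal X$-norm. Finally, I would factor $H^{5/12}u=g(\sqrt H)\cdot\bigl(H^{1/2}\mathcal{I}u\bigr)$ with spectral symbol
$$g(\mu):=\frac{\mu^{-1/6}}{m_N(\mu)},$$
and verify $g\in L^\infty$ on the spectrum (away from $0$): on $[1,N]$ one has $m_N\equiv1$ and $\mu^{-1/6}\le1$; on $[N,\infty)$ one has $m_N(\mu)=(N/\mu)^{1-s}$, so $g(\mu)=N^{s-1}\mu^{5/6-s}\le N^{-1/6}\le1$, which is where the threshold $s>\tfrac{5}{6}$ is used decisively. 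The spectral multiplier theorem (Lemma 3.3) then gives $\|g(\sqrt H)(H^{1/2}\mathcal{I}u)\|_{\mathcal X_{0,1/2+}^\delta}\lesssim\|H^{1/2}\mathcal{I}u\|_{\mathcal X_{0,1/2+}^\delta}$, completing the chain.

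The main obstacle is the low spectrum $\mu\to0$, where $g(\mu)\sim\mu^{-1/6}$ diverges and Lemma 3.3 does not apply as stated. I would handle this by a Littlewood--Paley split $u=\mathcal P_{\le1}u+\mathcal P_{>1}u$: on the high-spectrum piece the argument above runs without modification since $g$ is bounded there, while the low-spectrum piece is handled via the Bernstein-type inequality $\|\mathcal P_{\le1}f\|_{L^6}\lesssim\|\mathcal P_{\le1}f\|_{L^{9/4}}$ (valid by spectral localization together with Lemma 3.3) followed by Strichartz, with the resulting low-mode quantity absorbed into $\|H^{1/2}\mathcal{I}u\|_{\mathcal X_{0,1/2+}^\delta}$ through the bootstrap setup of Proposition 5.1 (where mass conservation controls the $L^\infty_t L^2_x$ contribution uniformly on $[0,\delta]$). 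The fact that the numerology $s>\tfrac{5}{6}$ is exactly what makes $g$ bounded at the high end is the structural heart of why the $I$-method lands at this regularity threshold.
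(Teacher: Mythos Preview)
Your high-spectrum argument is sound: the pair $(12,\tfrac{9}{4})$ is admissible, the Sobolev embedding and norm equivalence are applied within their valid ranges, and the spectral symbol $g(\mu)=\mu^{-1/6}/m_N(\mu)$ is indeed bounded on $[1,\infty)$ precisely when $s\ge\tfrac{5}{6}$. (Since the $\mathcal X_{0,1/2+}$-norm is $L^2$-based in space, $L^\infty$-boundedness of $g$ on the spectrum already suffices; Lemma~3.3 is not needed there.) This differs from the paper only in packaging: the paper splits at frequency $N$ rather than $1$, and for the high piece uses the admissible pair $(\tfrac{2}{1-s},\tfrac{6}{1+2s})$, invoking $s>\tfrac{5}{6}$ through the temporal H\"older inclusion $L_t^{2/(1-s)}\hookrightarrow L_t^{12}$ instead of through a multiplier bound.

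The low-spectrum piece, however, has a genuine gap. After your Bernstein step and Strichartz you land at $\|\mathcal P_{\le1}u\|_{\mathcal X_{0,1/2+}^\delta}$, and you then claim this can be absorbed into $\|H^{1/2}\mathcal I u\|_{\mathcal X_{0,1/2+}^\delta}$. But the connecting spectral multiplier has symbol $\chi_{\le1}(\lambda)\,\lambda^{-1}m_N(\lambda)^{-1}=\chi_{\le1}(\lambda)\,\lambda^{-1}$, which is unbounded as $\lambda\to0$; the absence of a zero resonance does not remove the continuous spectrum of $H$ at the origin, so this operator is not $L^2$-bounded. Your appeal to mass conservation and the ``bootstrap setup of Proposition~5.1'' is both vague and circular: Lemma~5.2 is a standalone estimate used \emph{inside} the proof of Proposition~5.1, and mass conservation controls only $\|u\|_{L_t^\infty L_x^2}$, not the spacetime norm $\|\mathcal P_{\le1}u\|_{\mathcal X_{0,1/2+}^\delta}$. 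The fix is easy and is exactly what the paper does: for the low piece use the Sobolev inequality (Lemma~3.6) $\|f\|_{L_x^6}\lesssim\|H^{1/2}f\|_{L_x^2}$ directly, obtaining
\[
\|\mathcal P_{\le1}u\|_{L_t^{12}L_x^6}\le\delta^{1/12}\|\mathcal P_{\le1}u\|_{L_t^\infty L_x^6}\lesssim\|H^{1/2}\mathcal P_{\le1}u\|_{L_t^\infty L_x^2}\le\|H^{1/2}\mathcal I u\|_{L_t^\infty L_x^2}\lesssim\|H^{1/2}\mathcal I u\|_{\mathcal X_{0,1/2+}^\delta},
\]
where the penultimate step uses that $\mathcal I$ acts as the identity on the range of $\mathcal P_{\le1}$.
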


\begin{proof}
Split $u=\mathcal{P}_{\leq N}u+\mathcal{P}_{>N}u$. By the Sobolev inequality, Strichartz estimates and the spectral multiplier theorem, we obtain
\begin{align*}
\|\mathcal{P}_{\leq N}u\|_{L_t^{12}L_x^6}&\leq\delta^{1/12}\|\mathcal{P}_{\leq N}u\|_{L_t^\infty L_x^6}\lesssim\|H^{1/2}\mathcal{I}u\|_{L_t^\infty L_x^2}\lesssim\|H^{1/2}\mathcal{I}u\|_{\mathcal{X}_{0,1/2+}^\delta},\\
\|\mathcal{P}_{\geq N}u\|_{L_t^{12}L_x^6}&\leq\delta^{\frac{6s-5}{12}}\|\mathcal{P}_{\geq N}u\|_{L_t^{2/(1-s)}L_x^6}\lesssim\|(H^{-\frac{1-s}{2}}\mathcal{I}^{-1})\mathcal{P}_{\geq N}(H^{1/2}\mathcal{I}u)\|_{L_t^{2/(1-s)}L_x^{6/(1+2s)}}\\
&\lesssim\|H^{1/2}\mathcal{I}u\|_{L_t^{2/(1-s)}L_x^{6/(1+2s)}}\lesssim \|H^{1/2}\mathcal{I}u\|_{\mathcal{X}_{0,1/2+}^\delta}.
\end{align*}
\end{proof}

\begin{proof}[Proof of Proposition 5.1]
Applying Lemma 3.8 to the Duhamel formula
$$u(t)=e^{-itH}u_0+i\int_0^te^{-i(t-s)H}(|u|^2u)(s) ds,$$
we write
$$\|H^{1/2}\mathcal{I}u\|_{\mathcal{X}_{0,1/2+}^\delta}\lesssim\|H^{1/2} \mathcal{I}u_0\|_{L^2}+\|H^{1/2}\mathcal{I}(|u|^2u)\|_{\mathcal{X}_{0,-1/2+}^\delta}\lesssim1+\delta^{0+}\|H^{1/2}\mathcal{I}(|u|^2u)\|_{\mathcal{X}_{0,-1/4}^\delta}.$$
Then a small constant $\delta>0$ will be obtained by the standard nonlinear iteration argument, once we show that
\begin{equation}
\|H^{1/2}\mathcal{I}(|u|^2u)\|_{\mathcal{X}_{0,-1/4}^\delta}\lesssim\|H^{1/2}\mathcal{I}u\|_{\mathcal{X}_{0,1/2+}^\delta}^3.
\end{equation}
Observe that by interpolation between $\|v\|_{L_{t\in\mathbb{R}}^2L_x^2}\leq\|v\|_{\mathcal{X}_{0,0}}$ and $\|v\|_{L_{t\in\mathbb{R}}^\infty L_x^2}\leq\|v\|_{\mathcal{X}_{0,1/2+}}$,
$$\|v\|_{L_t^3L_x^2(\mathbb{R}\times\mathbb{R}^3)}\lesssim\|v\|_{\mathcal{X}_{0,1/4}}\Leftrightarrow \|v\|_{\mathcal{X}_{0,-1/4}} \lesssim\|v\|_{L_t^{3/2}L_x^2(\mathbb{R}\times\mathbb{R}^3)}.$$
Hence, it follows that 
$$\|w\|_{\mathcal{X}_{0,-1/4}^\delta}=\inf_{v=w\textup{ on }[0,\delta]}\|v\|_{\mathcal{X}_{0,-1/4}}\leq\inf_{v=w\textup{ on }[0,\delta]}\|v\|_{L_t^{3/2}L_x^2(\mathbb{R}\times\mathbb{R}^3)}=\|w\|_{L_{t\in[0,\delta]}^{3/2}L_x^2}.$$
Therefore, for $(5.3)$, it is enough to show that 
$$\|H^{1/2}\mathcal{I}(|u|^2u)\|_{L_t^{3/2}L_x^2} \lesssim\|H^{1/2}\mathcal{I}u\|_{\mathcal{X}_{0,1/2+}^\delta}^3.$$
For the low frequency part, by the spectral multiplier theorem and $(5.2)$,
$$\|\mathcal{P}_{\leq 1}H^{1/2}\mathcal{I}(|u|^2u)\|_{L_t^{3/2}L_x^2}\lesssim\||u|^2u\|_{L_t^{3/2}L_x^2}\leq\delta^{5/12}\|u\|_{L_t^{12}L_x^6}^3\lesssim\|H^{1/2}\mathcal{I}u\|_{\mathcal{X}_{0,1/2+}^\delta}^3.$$
For the high frequency part, by Lemma 2.1,
\begin{align*}
&\|\mathcal{P}_{>1}H^{1/2}\mathcal{I}(|u|^2u)-P_{>1}|\nabla|I(|u|^2u)\|_{L_t^{3/2}L_x^2}\\
&\leq\sum_{N\geq1}\|(\mathcal{P}_NH^{1/2}\mathcal{I}-P_N|\nabla|I)(|u|^2u)\|_{L_t^{3/2}L_x^2}\lesssim \sum_{N\geq1}N^{-1}\||u|^2u\|_{L_t^{3/2}L_x^2}\\
&\leq\||u|^2u\|_{L_t^{3/2}L_x^2}\leq\delta^{5/12}\|u\|_{L_t^{12}L_x^6}^3\lesssim\|H^{1/2}\mathcal{I}u\|_{\mathcal{X}_{0,1/2+}^\delta}^3.
\end{align*}
Thus it remains to show 
$$\||\nabla|I(|u|^2u)\|_{L_t^{3/2}L_x^2} \lesssim\|H^{1/2}\mathcal{I}u\|_{\mathcal{X}_{0,1/2+}^\delta}^3.$$
We split each $u$ into the low and high frequency parts, and then apply the Leibniz rule for $|\nabla|I$. When $|\nabla|I$ hits the low frequency, it is bounded by
$$\delta^{5/12}\||\nabla|IP_{\leq 1}u\|_{L_t^{12}L_x^6}\|\tilde{u}\|_{L_t^{12}L_x^6}\|\tilde{u}\|_{L_t^{12}L_x^6} \lesssim\|u\|_{L_t^{12}L_x^6}^3\lesssim\|H^{1/2}\mathcal{I}u\|_{\mathcal{X}_{0,1/2+}^\delta}^3\textup{ (by $(5.2)$)}$$
where $\tilde{u}$ is either $P_{\leq 1}u$ or $P_{>1}u$. When $|\nabla|I$ hits the high frequency, it is bounded by
$$\||\nabla|IP_{>1}u\|_{L_t^2L_x^6}\|\tilde{u}\|_{L_t^{12}L_x^6}\|\tilde{u}\|_{L_t^{12}L_x^6}\lesssim\||\nabla|IP_{>1}u\|_{L_t^2L_x^6}\|H^{1/2}\mathcal{I}u\|_{\mathcal{X}_{0,1/2+}^\delta}^2.$$
For $|\nabla|IP_{>1}u$, we write
$$\||\nabla|IP_{>1}u\|_{L_t^2L_x^6}\leq\|\mathcal{P}_{>1}H^{1/2}\mathcal{I}u\|_{L_t^2L_x^6}+\|(H^{1/2}\mathcal{I}\mathcal{P}_{>1}-|\nabla|IP_{>1})u\|_{L_t^2L_x^6}.$$
Applying Strichartz estimates to the first term and Lemma 2.1 and $(5.2)$ to the second term, we bound $\||\nabla|IP_{>1}u\|_{L_t^2L_x^6}$ by $\sim\|H^{1/2}\mathcal{I}u\|_{\mathcal{X}_{0,1/2+}^\delta}$.
\end{proof}

\subsection{A priori estimates}
Before proving the proposition, we collect a priori estimates. Let $\delta>0$ be in Proposition 5.1. We claim that we may assume that 
\begin{equation}
\|\mathcal{I}u\|_{L_{t\in[0,\delta]}^\infty L_x^4}^4\leq16.
\end{equation}
Indeed, by $H^s$-continuity of $u(t)$ and $(4.2)$, one can find a subinterval $[0,\tilde{\delta}]$ where $(5.4)$ holds. Then, it follows from the argument of Section 5.4$\sim$5.6 with $(5.1)$ and $(5.4)$ on $[0,\tilde{\delta}]$ that $$E[\mathcal{I}u(t)]\leq 2\textup{ for }0\leq t\leq\tilde{\delta}\Rightarrow \|\mathcal{I}u(t)\|_{L_x^4}^4\leq8\textup{ for }0\leq t\leq\tilde{\delta}.$$
Thus one can extend $[0,\tilde{\delta}]$ little bit more with the same a priori bounds. Repeating, we extend to $[0,\delta]$. For simplicity, we omit this iteration procedure and just assume $(5.4)$.
\begin{lemma}[Collection of a priori estimates] Assume that $V\in B\cap L^\infty$, $\|V_-\|_{\mathcal{K}}<4\pi$ and $u$ satisfies $(5.1)$ and $(5.4)$. Then the following estimates hold:
\begin{equation}\label{APriori1}
\|H^{1/2}\mathcal{I}u\|_{L_t^q L_x^r}\lesssim 1,\ \|\nabla\mathcal{I}u\|_{L_t^q L_x^r}\lesssim 1\textup{, where }\tfrac{2}{q}+\tfrac{3}{r}=\tfrac{3}{2},\ 2\leq q,r\leq\infty,
\end{equation}
\begin{equation}\label{APriori2}
\|u\|_{L_t^{12}L_x^6}\lesssim 1\textup{ (Lemma 5.2)},
\end{equation}
\begin{equation}\label{APriori3}
\||\nabla|^\beta\tilde{\mathcal{I}}u\|_{L_t^{12}L_x^6}\lesssim N^{-2+\beta}\textup{ (Lemma 4.2 and $(5.7)$)},
\end{equation}
\begin{equation}\label{APriori4}
\|I^{-1}\mathcal{I}u\|_{L_t^4L_x^6}\lesssim1,
\end{equation}
\begin{equation}\label{APriori5}
\|I^{-1}\tilde{\mathcal{I}}u\|_{L_t^{12}L_x^6}\lesssim N^{-2}.
\end{equation}
\end{lemma}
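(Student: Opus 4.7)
The plan is to establish the five bounds in the stated order, since each leans on its predecessors. For \eqref{APriori1}, applying the Strichartz estimate (Lemma 3.7) to the hypothesis $(5.1)$ gives $\|H^{1/2}\mathcal{I}u\|_{L_t^qL_x^r}\lesssim\|H^{1/2}\mathcal{I}u\|_{\mathcal{X}_{0,1/2+}^\delta}\lesssim 1$ for every admissible pair. The $\nabla\mathcal{I}u$-version then follows from the norm equivalence of Lemma 3.5 ($\|H^{1/2}f\|_{L^r}\sim\||\nabla|f\|_{L^r}$) and the $L^r$-boundedness of Riesz transforms, at least for $1<r<3$; the Strichartz-range endpoints $r\in[3,6]$ require going through the intertwining identity $\mathcal{I}u=WIW^*u$ explicitly, since Lemma 3.5 just fails there. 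Estimate \eqref{APriori2} is Lemma 5.2 applied to $(5.1)$. For \eqref{APriori3}, Lemma 4.2 yields $\||\nabla|^\beta\tilde{\mathcal{I}}\|_{L^6\to L^6}\lesssim N^{-2+\beta}$ pointwise in time, and inserting \eqref{APriori2} after taking the $L_t^{12}$ norm closes the estimate.

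For \eqref{APriori4}, I would decompose $I^{-1}\mathcal{I}u=u+I^{-1}\tilde{\mathcal{I}}u$. H\"older in time on $[0,\delta]$ together with \eqref{APriori2} gives $\|u\|_{L_t^4L_x^6}\leq\delta^{1/6}\|u\|_{L_t^{12}L_x^6}\lesssim 1$, and likewise \eqref{APriori5} yields $\|I^{-1}\tilde{\mathcal{I}}u\|_{L_t^4L_x^6}\leq\delta^{1/6}\|I^{-1}\tilde{\mathcal{I}}u\|_{L_t^{12}L_x^6}\lesssim N^{-2}$. Thus \eqref{APriori4} reduces to \eqref{APriori5}.

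The core estimate is \eqref{APriori5}. I would decompose $\tilde{\mathcal{I}}u$ by the standard Fourier Littlewood--Paley pieces $P_{\leq N}\tilde{\mathcal{I}}u+\sum_{M>N}P_M\tilde{\mathcal{I}}u$. On the low half, $m_N^{-1}\equiv 1$ on $[0,N]$, so $\|I^{-1}P_{\leq N}\tilde{\mathcal{I}}u\|_{L_t^{12}L_x^6}\lesssim\|\tilde{\mathcal{I}}u\|_{L_t^{12}L_x^6}\lesssim N^{-2}$ via \eqref{APriori3} at $\beta=0$. For each dyadic scale $M>N$, the operator $I^{-1}P_M=m_N^{-1}(M)P_M$ carries a factor $\sim(M/N)^{1-s}$, and the Bernstein inequality $\|P_Mf\|_{L^6}\lesssim M^{-1}\||\nabla|f\|_{L^6}$ combined with \eqref{APriori3} at $\beta=1$ produces $\|I^{-1}P_M\tilde{\mathcal{I}}u\|_{L_t^{12}L_x^6}\lesssim(M/N)^{1-s}M^{-1}N^{-1}$. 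Summing the dyadic geometric series $\sum_{M>N}(M/N)^{1-s}M^{-1}=N^{s-1}\sum_{M>N}M^{-s}\sim N^{-1}$ (convergent since $s>0$) and multiplying by $N^{-1}$ yields the desired $N^{-2}$.

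The main obstacle I expect is the summation in \eqref{APriori5}: one has to balance the high-frequency loss $(M/N)^{1-s}$ coming from $I^{-1}$ against the decay from Lemma 4.2, and this forces the use of \eqref{APriori3} at the top endpoint $\beta=1$ rather than at $\beta=0$. A smaller but genuine issue is the endpoint $r=6$ in \eqref{APriori1}, where Lemma 3.5 narrowly misses and one must appeal to the intertwining identity directly.
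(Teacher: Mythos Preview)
Your arguments for \eqref{APriori2}--\eqref{APriori5} are correct. For \eqref{APriori5} the paper does the same low/high split but uses $\beta=1-s$ in one shot (via the multiplier bound $I^{-1}P_{>N}\lesssim N^{-(1-s)}|\nabla|^{1-s}$) instead of your dyadic sum with $\beta=1$; both give $N^{-2}$. Your reduction of \eqref{APriori4} to \eqref{APriori5} through $I^{-1}\mathcal{I}=\mathrm{Id}+I^{-1}\tilde{\mathcal{I}}$ is in fact cleaner than the paper's route, which bounds $\|P_{>N}I^{-1}\mathcal{I}u\|_{L_t^4L_x^6}$ by $\||\nabla|^{1-s}\mathcal{I}u\|_{L_t^4L_x^6}$ and then Sobolev-embeds back to the $\nabla\mathcal{I}u$ part of \eqref{APriori1} at a Strichartz pair with $r<3$.

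The genuine gap is the second half of \eqref{APriori1} for $r\in[3,6]$. You correctly flag that Lemma~3.5 fails there, but the intertwining identity does not rescue it: writing $\mathcal{I}u=WIW^*u$ forces you to pass $|\nabla|$ through $W$, and the structure formula for $W$ carries an $x$-dependent measure $g_{s,y}(x)$, so $[\,|\nabla|,W\,]$ is not controlled by Theorem~3.1. Equivalently, bounding $\||\nabla|\mathcal{I}u\|_{L^6}$ by $\|H^{1/2}\mathcal{I}u\|_{L^6}$ asks for $|\nabla|H^{-1/2}$ bounded on $L^6$, which is precisely the missing case of Lemma~3.5. The paper's fix is different: interpolate between $(q,r)=(\infty,2)$ (where norm equivalence does apply) and $(2,6)$, and for $(2,6)$ split
\[
|\nabla|\mathcal{I}u=|\nabla|P_{\leq 1}\mathcal{I}u+\big(|\nabla|P_{>1}-H^{1/2}\mathcal{P}_{>1}\big)\mathcal{I}u+H^{1/2}\mathcal{P}_{>1}\mathcal{I}u.
\]
The low-frequency piece is Bernstein plus \eqref{APriori2}, the last piece is Strichartz, and the middle piece is exactly the high-frequency approximation Lemma~2.2, summed over dyadic scales $M>1$ to give $\sum_{M>1}M^{-1}\|u\|_{L_t^{12}L_x^6}\lesssim 1$. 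This is where the Section~2 machinery enters, and it cannot be replaced by a soft intertwining argument.
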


\begin{proof}
$(5.5)$: The first inequality follows from Strichartz estimates. For the second one, we observe that by the norm equivalence, 
$$\|\nabla\mathcal{I}u\|_{L_t^\infty L_x^2}\sim \|H^{1/2}\mathcal{I}u\|_{L_t^\infty L_x^2}\lesssim 1.$$
By interpolation, it suffices to show for $(q,r)=(2,6)$. Indeed, for the low frequency part, by Lemma 5.2, 
$$\|\nabla P_{\leq 1}\mathcal{I}u\|_{L_t^2L_x^6}\lesssim \delta^{5/12}\|u\|_{L_t^{12}L_x^6}\lesssim1.$$
For the high frequency part, we write 
$|\nabla|P_{> 1}\mathcal{I}u=(|\nabla|P_{>1}-H^{1/2}\mathcal{P}_{>1})\mathcal{I}u+H^{1/2}\mathcal{P}_{>1}\mathcal{I}u$. For the first term, by Proposition 2.2, we obtain 
\begin{align*}
\|(|\nabla|P_{>1}-H^{1/2}\mathcal{P}_{>1})\mathcal{I}u\|_{L_t^2L_x^6}&\leq\delta^{5/12}\sum_{M>1}\|(|\nabla|P_M-H^{1/2}\mathcal{P}_M)\mathcal{I}u\|_{L_t^{12}L_x^6}\\
&\leq\sum_{M>1}M^{-1}\|u\|_{L_t^{12}L_x^6}\leq \|u\|_{L_t^{12}L_x^6}\lesssim1.
\end{align*}
The second term is bounded by Strichartz estimates.\\
$(5.8)$: Similarly, for the low and high frequency parts, we have
\begin{align*}
\|P_{\leq N}I^{-1}\mathcal{I}u\|_{L_t^4L_x^6}&\leq\|P_{\leq N}\mathcal{I}u\|_{L_t^4L_x^6}\leq\delta^{1/6}\|u\|_{L_t^{12}L_x^6}\lesssim1,\\
\|P_{>N}I^{-1}\mathcal{I}u\|_{L_t^4L_x^6}&\leq\||\nabla|^{1-s}\mathcal{I}u\|_{L_t^4L_x^6}\lesssim\delta^{\frac{2s-1}{4}}\|\nabla\mathcal{I}u\|_{L_t^{2/(1-s)}L_x^{6/(1+2s)}}\lesssim1.
\end{align*}
$(5.9)$: Splitting $I^{-1}\tilde{\mathcal{I}}u$ into the low and high frequency parts, we write
\begin{align*}
\|I^{-1}\tilde{\mathcal{I}}u\|_{L_t^4L_x^6}&\leq\delta^{1/6}\|I^{-1}\tilde{\mathcal{I}}u\|_{L_t^{12}L_x^6}\leq\|P_{\leq N}I^{-1}\tilde{\mathcal{I}}u\|_{L_t^{12}L_x^6}+\|P_{>N}I^{-1}\tilde{\mathcal{I}}u\|_{L_t^{12}L_x^6}\\
&\lesssim\|\tilde{\mathcal{I}}u\|_{L_t^{12}L_x^6}+N^{-(1-s)}\||\nabla|^{1-s}\tilde{\mathcal{I}}u\|_{L_t^{12}L_x^6}\lesssim N^{-2}.
\end{align*}
\end{proof}

\subsection{Outline of the proof}
Let $\delta>0$ be a small number given by Proposition 5.1, and choose a large number $N\gg 1/\delta$. By the persistence of regularity \cite[Corollary 4.4]{H2}, it suffices to show the almost conservation laws for solutions in $C_t([0,\delta]; \mathfrak{H}_x^2)$, where $\mathfrak{H}^2$ is the domain of the self-adjoint operator $H$. It will guarantee the following formal calculations make sense.

By the fundamental theorem of calculus,
\begin{align*}
E[\mathcal{I}u(\delta)]-E[\mathcal{I}u_0]&=\int_0^\delta\frac{d}{dt}\Big(\frac{1}{2}\int (H\mathcal{I}u)\overline{\mathcal{I}u} dx+\frac{1}{4}\int_{\mathbb{R}^3}|\mathcal{I}u|^4 dx\Big)dt\\
&=\int_0^\delta\int_{\mathbb{R}^3}\Re[\overline{\mathcal{I}u_t} (H\mathcal{I}u+|\mathcal{I}u|^2 \mathcal{I}u)]dxdt.
\end{align*}
Plugging the identity 
\begin{align*}
\Re[\overline{\mathcal{I}u_t} (H\mathcal{I}u+|\mathcal{I}u|^2 \mathcal{I}u)]&=\Re[\overline{\mathcal{I}u_t} (H\mathcal{I}u+|\mathcal{I}u|^2 \mathcal{I}u-i\mathcal{I}u_t)]\\
&=\Re[\overline{\mathcal{I}u_t} (H\mathcal{I}u+|\mathcal{I}u|^2 \mathcal{I}u-\mathcal{I}(Hu+|u|^2u))]\\
&=\Im[\overline{\mathcal{I}(Hu+|u|^2u)} (|\mathcal{I}u|^2 \mathcal{I}u-\mathcal{I}(|u|^2u))],
\end{align*}
we write
$$E[\mathcal{I}u(\delta)]-E[\mathcal{I}u_0]=\Im\int_0^\delta\int_{\mathbb{R}^3}\overline{\mathcal{I}(Hu+|u|^2u)} (|\mathcal{I}u|^2 \mathcal{I}u-\mathcal{I}(|u|^2u)) dxdt.$$
Hence the almost conservation law follows once we show that 
\begin{align*}
\textup{Term}_1&=\int_0^\delta\int_{\mathbb{R}^3}\overline{H\mathcal{I}u} (|\mathcal{I}u|^2 \mathcal{I}u-\mathcal{I}(|u|^2u)) dxdt=O(N^{-1+}),\\
\textup{Term}_2&=\int_0^\delta\int_{\mathbb{R}^3}\overline{\mathcal{I}(|u|^2u)} (|\mathcal{I}u|^2 \mathcal{I}u-\mathcal{I}(|u|^2u)) dxdt=O(N^{-1+}).
\end{align*}
We will prove them in two steps. First, we approximate $\textup{Term}_1$ and $\textup{Term}_2$ by
\begin{align*}
\textup{Term}_1'&=\int_0^\delta\int_{\mathbb{R}^3}\overline{H\mathcal{I}u}\Big(|\mathcal{I}u|^2 \mathcal{I}u-I(|I^{-1}\mathcal{I}u|^2(I^{-1}\mathcal{I}u)\Big) dxdt,\\
\textup{Term}_2'&=\int_0^\delta\int_{\mathbb{R}^3}\overline{\mathcal{I}(|u|^2u)}\Big(|\mathcal{I}u|^2 \mathcal{I}u-I(|I^{-1}\mathcal{I}u|^2(I^{-1}\mathcal{I}u)\Big) dxdt
\end{align*}
with $O(N^{-1+})$-error. Next, we show that $\textup{Term}_1'$ and $\textup{Term}_2'$ are $O(N^{-1+})$.

\subsection{Approximation step} 
We will show that $(\textup{Term}_1-\textup{Term}_1')=O(N^{-1})$. First, we write 
$$(\textup{Term}_1-\textup{Term}_1')\leq\|H\mathcal{I}u\|_{L_t^\infty\dot{H}_x^{-1}}\|\mathcal{I}(|u|^2u)-I(|I^{-1}\mathcal{I}u|^2(I^{-1}\mathcal{I}u)\|_{L_t^1H_x^1}.$$
For the first term, by the norm equivalence and $(5.5)$, $\|H\mathcal{I}u\|_{L_t^\infty\dot{H}_x^{-1}}\sim\|\nabla\mathcal{I}u\|_{L_t^\infty L_x^2}\lesssim1$. For the second term, we split
$$\mathcal{I}(|u|^2u)-I(|I^{-1}\mathcal{I}u|^2(I^{-1}\mathcal{I}u)=\tilde{\mathcal{I}}(|u|^2u)-I(|I^{-1}\mathcal{I}u|^2(I^{-1}\mathcal{I}u)-|u|^2u).$$
By Lemma 4.2 and $(5.7)$,  
$$\|\tilde{\mathcal{I}}(|u|^2u)\|_{L_t^1\dot{H}_x^1}\lesssim N^{-1}\||u|^2u\|_{L_t^1L_x^2}\lesssim \delta^{3/4}N^{-1}\|u\|_{L_t^{12}L_x^6}^3\lesssim N^{-1}.$$
It remains to show that
$$\||\nabla|I(|I^{-1}\mathcal{I}u|^2(I^{-1}\mathcal{I}u)-|u|^2u)\|_{L_x^1L_x^2}\lesssim N^{-1}.$$
We split 
$$|I^{-1}\mathcal{I}u|^2(I^{-1}\mathcal{I}u)-|u|^2u=(I^{-1}\tilde{\mathcal{I}}u)|I^{-1}\mathcal{I}u|^2+u\overline{(I^{-1}\tilde{\mathcal{I}}u)}(I^{-1}\mathcal{I}u)+|u|^2(I^{-1}\tilde{\mathcal{I}}u).$$
Then, by the H\"older inequalities, the Leibniz rule for $|\nabla|I$ and Lemma 5.3, we obtain
\begin{align*}
&\||\nabla|I((I^{-1}\tilde{\mathcal{I}}u)|I^{-1}\mathcal{I}u|^2)\|_{L_t^1L_x^2}\lesssim\delta^{5/12}\|\nabla\tilde{\mathcal{I}}u\|_{L_t^{12}L_x^6}\|I^{-1}\mathcal{I}u\|_{L_t^4L_x^6}^2\\
& \ \ \ \ \ \ \ \ \ \ \ \ \ \ \ \ \ \ \ \ \ \ \ \ \ \ \ \ \ \ \ \ \ \ \ \ \ \ +\delta^{1/6}\|I^{-1}\tilde{\mathcal{I}}u\|_{L_t^{12}L_x^6}\||\nabla|\mathcal{I}u\|_{L_t^2L_x^6}\|I^{-1}\mathcal{I}u\|_{L_t^4L_x^6}\lesssim N^{-1};\\
&\||\nabla|I(u(\overline{I^{-1}\tilde{\mathcal{I}}u})(I^{-1}\mathcal{I}u))\|_{L_t^1L_x^2}\lesssim\delta^{1/6}\||\nabla|Iu\|_{L_t^2L_x^6}\|I^{-1}\tilde{\mathcal{I}}u\|_{L_t^{12}L_x^6}\|I^{-1}\mathcal{I}u\|_{L_t^4L_x^6}\\
&\ \ \ \ \ \ \ \ \ \ \ \ \ \ \ \ \ \ \ \ \ \ \ \ \ \ \ \ \ \ \ \ \ \ \ \ \ \ \ +\delta^{7/12}\|u\|_{L_t^{12}L_x^6}\||\nabla|\tilde{\mathcal{I}}u\|_{L_t^{12}L_x^6}\|I^{-1}\mathcal{I}u\|_{L_t^4L_x^6}\\
&\ \ \ \ \ \ \ \ \ \ \ \ \ \ \ \ \ \ \ \ \ \ \ \ \ \ \ \ \ \ \ \ \ \ \ \ \ \ \ +\delta^{1/3}\|u\|_{L_t^{12}L_x^6}\|I^{-1}\tilde{\mathcal{I}}u\|_{L_t^{12}L_x^6}\||\nabla|\mathcal{I}u\|_{L_t^2L_x^6}\lesssim N^{-1};\\
&\||\nabla|I(|u|^2(I^{-1}\tilde{\mathcal{I}}u))\|_{L_t^1L_x^2}\lesssim\delta^{1/3}\||\nabla|Iu\|_{L_t^2L_x^6}\|u\|_{L_t^{12}L_x^6}\|I^{-1}\tilde{\mathcal{I}}u\|_{L_t^{12}L_x^6}\\
& \ \ \ \ \ \ \ \ \ \ \ \ \ \ \ \ \ \ \ \ \ \ \ \ \ \ \ \ \ \ \ \ +\delta^{1/3}\|u\|_{L_t^{12}L_x^6}^2\|\nabla\tilde{\mathcal{I}}u\|_{L_t^2L_x^6}\lesssim N^{-1}.
\end{align*}
Collecting all, we conclude that $(\textup{Term}_1-\textup{Term}_1')=O(N^{-1})$. Similarly, one can show that $(\textup{Term}_2-\textup{Term}_2')=O(N^{-1})$.

\subsection{$\textup{Term}_1'$}
We will show that $\textup{Term}_1'=O(N^{-1+})$. The proof will closely follow from that in \cite{CKSTT1}, but it has to be modified for the following technical reasons. Observe that by Fourier transform, we write  
\begin{equation}
\textup{Term}_1'=\int_0^\delta\int_{\sum_{j=1}^4\xi_j=0} \Big(1-\frac{m(\xi_1)}{m(\xi_2)m(\xi_3)m(\xi_4)}\Big)\widehat{\overline{H\mathcal{I}u}}(\xi_1)\widehat{\mathcal{I}u}(\xi_2) \widehat{\overline{\mathcal{I}u}}(\xi_3)\widehat{\mathcal{I}u}(\xi_4).
\end{equation}
In our case, we cannot take out the symbol $(1-\frac{m(\xi_1)}{m(\xi_2)m(\xi_3)m(\xi_4)})$ from the integral of each dyadic piece as in the homogeneous case \cite{CKSTT1}, since the $\mathcal{X}_{s,b}$-norm is not defined by Fourier transform. To solve this problem, we will discretize the symbol. The Strichartz exponents in \cite{CKSTT1} also have to be modified, because the norm equivalence $\|\nabla u\|_{L^r}\sim\|H^{1/2}u\|_{L^r}$ is valid only for $1<r<3$. 

We introduce a new Littlewood-Paley projection $Q_M:=(\tilde{\chi}_M\hat{f})^\vee$ where 
$$\tilde{\chi}_M(\lambda):=\frac{m(M)\chi_M(\lambda)}{\sum_{K\in 2^{\mathbb{Z}}}m(K)\chi_K(\lambda)}.$$
Observe that 
\begin{align*}
\frac{1}{m}&=\frac{1}{\sum_K m(K)\chi_K}=\sum_M\frac{\chi_M}{\sum_Km(K)\chi_K}=\sum_M\frac{1}{m(M)}\frac{m(M)\chi_M}{\sum_Km(K)\chi_K}=\sum_M\frac{\tilde{\chi}_M}{m(M)},
\end{align*}
and thus
$$I^{-1}f=\sum_M \frac{1}{m(M)}Q_Mf.$$
Using two Littlewood-Paley projections $P_M$ and $Q_M$, we decompose $(5.10)$ into the sum of dyadic pieces
\begin{equation}\label{DyadicPiece}
\begin{aligned}
\Big(1-\frac{m(N_1)}{m(N_2)m(N_3)m(N_4)}\Big)\int_0^\delta\int_{\mathbb{R}^3}\overline{v_1}v_2\overline{v_3}v_4 dxdt,
\end{aligned}
\end{equation}
where $v_1=P_{N_1}(H\mathcal{I}u)$ and $v_i=Q_{N_i}\mathcal{I}u$ for $i=2,3,4$. By symmetry, we may restrict the case $N_2\geq N_3\geq N_4$ in the sum. Note also that $N_1\lesssim N_2$ in $(5.11)$, since
$$\int_{\mathbb{R}^3}\overline{v_1}v_2\overline{v_3}v_4 dx=\int_{\sum_{i=1}^4\xi_i=0}\widehat{\overline{v_1}}(\xi_1)\hat{v}_2(\xi_2)\widehat{\overline{v_3}}(\xi_3)\hat{v}_4(\xi_4).$$
We split the sum into the following sub-cases.\\
\textbf{($\textup{Term}_1'$, Case 1: $N_2\ll N$)} In this case, the symbol is zero, and therefore $(\ref{DyadicPiece})=0$.\\
\textbf{($\textup{Term}_1'$, Case 2: $N_2\gtrsim N\gg N_3\geq N_4\Rightarrow N_1\sim N_2$)} By the mean value theorem,
\begin{align*}
\Big|1-\frac{m(N_1)}{m(N_2)m(N_3)m(N_4)}\Big|&=\frac{|m(N_2)-m(N_1)|}{m(N_2)}\lesssim\frac{|\nabla m(N_2)|}{m(N_2)}|N_2-N_1|\lesssim\frac{N_3}{N_2}.
\end{align*}
Using this bound and the the H\"older inequality, we write
$$|(\ref{DyadicPiece})|\lesssim\frac{N_3}{N_2}\|v_1\|_{L_t^{24/5}L_x^{36/13}}\|v_2\|_{L_t^{8/3-}L_x^4}\|v_3\|_{L_t^{24/5}L_x^{36/13}}\|v_4\|_{L_t^{24/5+}L_x^{36}}.$$
Note that $(\frac{24}{5},\frac{36}{13})$ and $(\frac{8}{3},4)$ are Strichartz exponent pairs. Thus, the norm equivalence ($\frac{36}{13}<3$) and Lemma 5.3,
\begin{equation}
\begin{aligned}
&\|v_1\|_{L_t^{24/5}L_x^{36/13}}\lesssim N_1\||\nabla|^{-1}H\mathcal{I}u\|_{L_t^{24/5}L_x^{36/13}}\sim N_1\|H^{1/2}\mathcal{I}u\|_{L_t^{24/5}L_x^{36/13}}\lesssim N_1\sim N_2;\\
&\|v_2\|_{L_t^{8/3-}L_x^4}\sim \delta^{0+}\|v_2\|_{L_t^{8/3}L_x^{4}}\leq\delta^{0+}N_2^{-1}\|\nabla\mathcal{I}u\|_{L_t^{8/3}L_x^4}\lesssim \delta^{0+}N_2^{-1};\\
&\|v_3\|_{L_t^{24/5}L_x^{36/13}}\sim N_3^{-1}\|\nabla\mathcal{I}u\|_{L_t^{24/5}L_x^{36/13}}\sim N_3^{-1};\\
&\|v_4\|_{L_t^{24/5+}L_x^{36}}\lesssim \||\nabla|^{1+}v_4\|_{L_t^{24/5+}L_x^{36/13-}}\lesssim N_4^{0+}\|\nabla\mathcal{I}u\|_{L_t^{24/5+}L_x^{36/13-}}\lesssim N_4^{0+},
\end{aligned}
\end{equation}
where $\frac{36}{13}-$ is chosen so that $(\frac{24}{5}+,\frac{36}{13}-)$ is a Strichartz exponent pair.  Therefore, we obtain
$$|(\ref{DyadicPiece})|\lesssim\delta^{0+}N_2^{-1}N_4^{0+}.$$
Summing up, we prove that  $\sum_{\textup{Case 2}}|(\ref{DyadicPiece})|\lesssim \delta^{0+}N^{-1+}.$\\
\textbf{($\textup{Term}_1'$, Case 3: $N_2\geq N_3\gtrsim N$)} 
Now, we use the trivial bound
\begin{equation}
\Big|1-\frac{m(N_1)}{m(N_2)m(N_3)m(N_4)}\Big|\lesssim\frac{m(N_1)}{m(N_2)m(N_3)m(N_4)}
\end{equation}
for the symbol, and consider the following six sub-cases separately.\\
\textbf{(Case 3-1a: $N_1\sim N_2\geq N_3\gtrsim N$; $N_4\leq N$)} Similarly, bounding $v_1, v_3$ in $L_t^{24/5}L_x^{36/13}$, $v_2$ in $L_t^{8/3-}L_x^4$ and $v_4$ in $L_t^{24/5-}L_x^{36}$ with $(5.12)$, we write
$$|(\ref{DyadicPiece})|\lesssim\frac{N_3^{1-s}}{N^{1-s}}\frac{N_1N_4^{0+}}{N_2N_3}\delta^{0+}\||\nabla|^{-1}v_1\|_{L_t^{24/5}L_x^{36/13}}\|\nabla v_2\|_{L_t^{8/3}L_x^4}.$$
Summing in $N_3, N_4$, we obtain
$$\sum_{\textup{Case 3-1a}}|(\ref{DyadicPiece})|\lesssim\delta^{0+}N^{-1+}\sum_{N_1\sim N_2\gtrsim N}\||\nabla|^{-1}v_1\|_{L_t^{24/5}L_x^{36/13}}\|\nabla v_2\|_{L_t^{8/3}L_x^4}.$$
By Corollary 2.3, Lemma \ref{Approximation1} and $(5.6)$, we approximate $|\nabla|^{-1}v_1=P_{N_1}|\nabla|^{-1}H\mathcal{I}u$ by $\mathcal{P}_{N_1}H^{1/2}\mathcal{I}u$ with error $N_1^{-2}\|H^{1/2}\mathcal{I}u\|_{L_t^{24/5}L_x^{36/13}}\lesssim N_1^{-2}$, and approximate 
$Q_{N_2}|\nabla|\mathcal{I}u$ by $\mathcal{Q}_{N_2}H^{1/2}\mathcal{I}u$ with error $N_2^{-1}\|\mathcal{I}u\|_{L_t^{8/3}L_x^4}\leq \delta^{3/8}N_2^{-1}\|\mathcal{I}u\|_{L_t^\infty L_x^4}\lesssim N_2^{-1}$, where $\mathcal{Q}_M=\tilde{\chi}_M(\sqrt{H})$. Therefore, by Strichartz estimates, we obtain
\begin{align*}
\sum_{\textup{Case 3-1a}}|(\ref{DyadicPiece})|&\lesssim\delta^{0+}N^{-1+}\sum_{N_1\sim N_2}\|\mathcal{P}_{N_1}H^{1/2}\mathcal{I}u\|_{L_t^{24/5}L_x^{36/13}}\|\mathcal{Q}_{N_2}H^{1/2}\mathcal{I}u\|_{L_t^{8/3}L_x^4}+\delta^{0+}N^{-1+}\\
&\lesssim\delta^{0+}N^{-1+}\sum_{N_1\sim N_2}\|\mathcal{P}_{N_1}H^{1/2}\mathcal{I}u\|_{\mathcal{X}_{0,1/2+}^\delta}\|\mathcal{Q}_{N_2}H^{1/2}\mathcal{I}u\|_{\mathcal{X}_{0,1/2+}^\delta}+\delta^{0+}N^{-1+}.
\end{align*}
Finally, using the Cauchy-Schwartz inequality to sum in $N_1, N_2$, we conclude that
$$\sum_{\textup{Case 3-1a}}|(\ref{DyadicPiece})|\lesssim\delta^{0+}N^{-1+}.$$
\textbf{(Case 3-1b: $N_1\sim N_2\geq N_3\gtrsim N$; $N_4\geq N$)} Bounding $v_1, v_3$ in $L_t^{24/5}L_x^{36/13}$, $v_2$ in $L_t^{8/3}L_x^4$ and $v_4$ in $L_t^{24/5}L_x^{36}$ with $(5.12)$, we get
\begin{align*}
|(\ref{DyadicPiece})|&\lesssim\frac{N_3^{1-s}N_4^{1-s}}{N^{2(1-s)}}\frac{N_1}{N_2N_3}\||\nabla|^{-1}v_1\|_{L_t^{24/5}L_x^{36/13}}\|\nabla v_2\|_{L_t^{8/3}L_x^4}\\
&= N^{-2(1-s)}N_3^{-s}N_4^{1-s}\||\nabla|^{-1}v_1\|_{L_t^{24/5}L_x^{36/13}}\|\nabla v_2\|_{L_t^{8/3}L_x^4}.
\end{align*}
Summing in $N_3,N_4$ and using the Cauchy-Schwartz inequality to sum in $N_1, N_2$ as we did in Case 3-1a, we obtain $\sum_{\textup{Case 3-1b}}|(\ref{DyadicPiece})|\lesssim N^{-1}.$\\
\textbf{(Case 3-2a: $N_2\sim N_3\gtrsim N$; $N_1\leq N$, $N_4\leq N$)} Bounding $v_1, v_3$ in $L_t^{24/5}L_x^{36/13}$, $v_2$ in $L_t^{8/3-}L_x^4$ and $v_4$ in $L_t^{24/5+}L_x^{36}$ with $(5.12)$, we get
$$|(\ref{DyadicPiece})|\lesssim\frac{N_2^{1-s}N_3^{1-s}}{N^{2(1-s)}}\frac{N_1N_4^{0+}}{N_2N_3}\delta^{0+}\lesssim\delta^{0+}N^{-1+2s}N_2^{-2s}N_4^{0+}.$$
\textbf{(Case 3-2b: $N_2\sim N_3\gtrsim N$; $N_1\geq N$, $N_4\leq N$)} Bounding $v_1, v_3$ in $L_t^{24/5}L_x^{36/13}$, $v_2$ in $L_t^{8/3-}L_x^4$ and $v_4$ in $L_t^{24/5+}L_x^{36}$ with $(5.12)$, we get
$$|(\ref{DyadicPiece})|\lesssim\frac{N_2^{1-s}N_3^{1-s}}{N^{1-s}N_1^{1-s}}\frac{N_1N_4^{0+}}{N_2N_3}\delta^{0+}\lesssim\delta^{0+}N^{-(1-s)}N_2^{-s}N_4^{0+}.$$
\textbf{(Case 3-2c: $N_2\sim N_3\gtrsim N$; $N_1\leq N$, $N_4\geq N$)} Bounding $v_1, v_3$ in $L_t^{24/5}L_x^{36/13}$, $v_2$ in $L_t^{8/3}L_x^4$ and $v_4$ in $L_t^{24/5}L_x^{36}$ with $(5.12)$, we get
$$|(\ref{DyadicPiece})|\lesssim\frac{N_2^{1-s}N_3^{1-s}N_4^{1-s}}{N^{3(1-s)}}\frac{N_1}{N_2N_3}\lesssim N^{-3(1-s)}N_2^{1-2s}N_4^{1-s}.$$
\textbf{(Case 3-2d: $N_2\sim N_3\gtrsim N$; $N_1,N_4\geq N$).} Bounding $v_1, v_3$ in $L_t^{24/5}L_x^{36/13}$, $v_2$ in $L_t^{8/3}L_x^4$ and $v_4$ in $L_t^{24/5}L_x^{36}$ with $(5.12)$, we get
$$|(\ref{DyadicPiece})|\lesssim\frac{N_2^{1-s}N_3^{1-s}N_4^{1-s}}{N_1^{1-s}N^{2(1-s)}}\frac{N_1}{N_2N_3}\lesssim N^{-2(1-s)}N_2^{-s}N_4^{1-s}.$$
For the above four sub-cases, we sum directly in $N_2,N_3, N_4$. Collecting all, we conclude that $\textup{Term}_1'=O(N^{-1+})$.

\subsection{$\textup{Term}_2'$}
As we did for $\textup{Term}_1'$, we write $\textup{Term}_2'$ as sum of 
\begin{equation}\label{DyadicPiece2}
\Big(1-\frac{m(N_1)}{m(N_2)m(N_3)m(N_4)}\Big)\int_0^\delta\int_{\mathbb{R}^3}\overline{w_1}v_2\overline{v_3}v_4 dxdt,
\end{equation}
where $w_1=P_{N_1}\mathcal{I}(|u|^2u)$ and $v_i=Q_{N_i}\mathcal{I}u$ for $i=2,3,4$. By symmetry, we may restrict to the case where $N_2\geq N_3\geq N_4$. \\
\textbf{($\textup{Term}_2'$, Case 1: $N_2\ll N$)} In this case, the symbol is zero, and thus $(\ref{DyadicPiece2})=0$.\\
\textbf{($\textup{Term}_2'$, Case 2: $N_2\gtrsim N$).} Applying $(5.12)$ with $m(N_1)\leq1$ and the Plancherel theorem, we write
$$|(\ref{DyadicPiece2})|\lesssim(m(N_2)m(N_3)m(N_4))^{-1} \| \overline{w_1}v_2\overline{v_3}v_4\|_{L_{t,x}^1}.$$
Consider the following three sub-cases.\\
\textbf{(Case 2a: $N_2\gtrsim N\gg N_3\geq N_4$)} By the H\"older inequality, the Sobolev inequality and Lemma 5.3, we get
\begin{align*}
|(\ref{DyadicPiece2})|&\leq m(N_2)^{-1} \|w_1\|_{L_t^{2-}L_x^2}\|v_2\|_{L_t^{8/3}L_x^4} \|v_3\|_{L_t^{16}L_x^8} \|v_4\|_{L_t^{16+}L_x^8}\\
&\lesssim\frac{N_2^{1-s}}{N^{1-s}} \delta^{0+}\|u\|_{L_{t,x}^6}^3N_2^{-1}\|\nabla\mathcal{I}u\|_{L_t^{8/3}L_x^4} \|\nabla\mathcal{I}u\|_{L_t^{16}L_x^{24/11}} N_4^{0+}\|\nabla\mathcal{I}u\|_{L_t^{16+}L_x^{24/11-}}\\
&\lesssim \delta^{0+}N^{-(1-s)}N_2^{-s}N_4^{0+}.
\end{align*}
\textbf{(Case 2b: $N_2\geq N_3\gtrsim N\gg N_4$).} Taking $\|w_1\|_{L_t^{2-}L_x^2}$, $\|v_2\|_{L_t^{8/3}L_x^4}$, $\|v_3\|_{L_t^{16}L_x^8}$ and $\|v_4\|_{L_t^{16+}L_x^8}$, we get
\begin{align*}
|(\ref{DyadicPiece2})|&\lesssim\frac{N_2^{1-s}N_3^{1-s}}{N^{2(1-s)}} \delta^{0+}N_2^{-1}N_4^{0+}=\delta^{0+}N^{-2(1-s)}N_2^{-s}N_3^{1-s}N_4^{0+}.
\end{align*}
\textbf{(Case 2c: $N_2\geq N_3\geq N_4\gtrsim N$).} Taking $\|w_1\|_{L_{t,x}^2}$, $\|v_2\|_{L_t^{8/3}L_x^4}$, $\|v_3\|_{L_t^{16}L_x^8}$ and $\|v_4\|_{L_t^{16}L_x^8}$, we get
$$|(\ref{DyadicPiece2})|\lesssim \frac{N_2^{1-s}N_3^{1-s}N_4^{1-s}}{N^{3(1-s)}}N_2^{-1}=N^{-3(1-s)}N_2^{-s}N_3^{1-s}N_4^{1-s}.$$
In each case, summing in $N_2, N_3, N_4$, we conclude that $\textup{Term}_2'=O(N^{-1+})$.

\appendix

\section{Zero Resonance}

\begin{lemma}[Absence of zero resonance]
If $V\in L^{3/2,\infty}$ and $\|V_-\|_{\mathcal{K}}<4\pi$, then zero is not a resonance.
\end{lemma}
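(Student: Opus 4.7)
My approach is to combine Kato's distributional inequality with a pointwise comparison between the Green's kernels of $H_+:=-\Delta+V_+$ and $-\Delta$, where $V_+:=\max(V,0)\geq 0$. This way the resulting self-bootstrapping bound on $|\psi|$ will involve only $\|V_-\|_{\mathcal{K}}$ and not the full Kato norm of $V$. Suppose $\psi\in L^\infty$ is a zero resonance, so $\psi=-(-\Delta)^{-1}(V\psi)$ pointwise---the integral converges absolutely because $V\psi\in L^{3/2,\infty}$ and $|x|^{-1}\in L^{3,\infty}$---and distributionally $-\Delta\psi+V\psi=0$. Applying Kato's inequality $\Delta|\psi|\geq\operatorname{Re}((\operatorname{sgn}\bar\psi)\Delta\psi)=V|\psi|$ (using $V$ real) and rearranging yields
$$H_+|\psi|\leq |V_-|\,|\psi|\qquad\text{in the sense of distributions.}$$

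The key analytic input is the domination $0\leq G_+(x,y)\leq\frac{1}{4\pi|x-y|}$, where $G_+$ denotes the integral kernel of $H_+^{-1}$. Since $V_+\geq 0$, this follows from the Feynman--Kac representation
$$e^{-tH_+}(x,y)=\mathbb{E}_x\bigl[e^{-\int_0^tV_+(B_s)\,ds}\delta_y(B_t)\bigr]\leq e^{t\Delta}(x,y),$$
integrated against $\int_0^\infty e^{t\Delta}(x,y)\,dt=\frac{1}{4\pi|x-y|}$. As $V_+$ may fail to lie in the Kato class (we only know $V_+\in L^{3/2,\infty}$), I would justify this rigorously by truncating to $V_+^{(n)}:=V_+\wedge n\cdot\chi_{|x|\leq n}$, for which $G_+^{(n)}$ is a classical Green's function pointwise dominated by $(4\pi|x-y|)^{-1}$, and then passing to the limit by monotone convergence.

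Given the kernel bound and the positivity-preserving nature of $G_+$, the subsolution inequality above lifts via the $H_+$-maximum principle on $\mathbb{R}^3$ to the pointwise estimate
$$|\psi(x)|\leq G_+\bigl(|V_-|\,|\psi|\bigr)(x)\leq\int\frac{|V_-(y)|\,|\psi(y)|}{4\pi|x-y|}\,dy\leq\frac{\|V_-\|_{\mathcal{K}}}{4\pi}\|\psi\|_{L^\infty}.$$
Taking the supremum over $x$ gives $(1-\|V_-\|_{\mathcal{K}}/(4\pi))\|\psi\|_{L^\infty}\leq 0$, and since $\|V_-\|_{\mathcal{K}}<4\pi$, we conclude $\psi\equiv 0$.

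The main obstacle is the maximum-principle step: from $H_+w\leq 0$ for the bounded function $w:=|\psi|-G_+(|V_-|\,|\psi|)$ one must deduce $w\leq 0$ everywhere, which on $\mathbb{R}^3$ is a Liouville-type assertion demanding some control of $w$ at infinity. I would handle this by verifying that both $|\psi(x)|$ and $G_+(|V_-|\,|\psi|)(x)$ vanish as $|x|\to\infty$ --- the former from the Lippmann--Schwinger representation $\psi=-G_0(V\psi)$ together with $V\psi\in L^{3/2,\infty}$, and the latter from the bound $G_+\leq G_0$ and the corresponding decay of the Newtonian potential of an $L^{3/2,\infty}$ function --- after which the standard maximum principle for Schr\"odinger operators with non-negative potentials closes the argument.
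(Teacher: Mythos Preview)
Your approach is genuinely different from the paper's. The paper never attempts a pointwise self-improving bound on $\psi$; instead it tests the equation against cutoffs $\psi\chi_R$, invokes the quadratic-form lower bound
\[
\Big(1-\frac{\|V_-\|_{\mathcal K}}{4\pi}\Big)\|\nabla f\|_{L^2}^2\leq \langle Hf,f\rangle_{L^2},
\]
computes $\langle H(\psi\chi_R),\psi\chi_R\rangle$ via the commutator $[H,\chi_R]$, and lets $R\to\infty$. That route stays entirely inside an $L^2$ energy framework and never needs to know that $\psi$ itself decays at infinity.

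Your argument, by contrast, hinges on exactly that decay, and the justification you give fails. The claim that the Newtonian potential of an $L^{3/2,\infty}$ function vanishes at infinity is false: take $g(y)=|y|^{-2}\chi_{\{|y|\ge 1\}}\in L^{3/2,\infty}$. A scaling substitution $y=|x|z$ gives
\[
(G_0g)(x)=\frac{1}{4\pi}\int_{|z|\ge 1/|x|}\frac{dz}{|z|^2\,|\hat x-z|}\ \xrightarrow[|x|\to\infty]{}\ \frac{1}{4\pi}\int_{\mathbb R^3}\frac{dz}{|z|^2\,|e_1-z|}\in(0,\infty),
\]
so $G_0g$ is bounded yet converges to a strictly positive constant. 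Since the only hypothesis on $V_+$ is membership in $L^{3/2,\infty}$, you cannot conclude from $\psi=-G_0(V\psi)$ that $|\psi(x)|\to 0$, and the same counterexample obstructs the asserted decay of $G_+(|V_-||\psi|)\le G_0(|V_-||\psi|)$. Without $\limsup_{|x|\to\infty}w(x)\le 0$ the Liouville/maximum-principle step on $\mathbb R^3$ is simply not available---bounded subsolutions of $H_+$ need not be nonpositive (for $V_+=0$, think of $w(x)=1-|x|^{-1}$ on $\{|x|\ge 1\}$). The Kato smallness you have applies only to $V_-$, so it cannot rescue the $V_+$-part of the decay. The idea of isolating $V_-$ through Kato's inequality is attractive, but to close it you would need an independent reason why a zero resonance in $L^\infty$ must vanish at infinity under the bare hypothesis $V\in L^{3/2,\infty}$, and that is not available.
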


\begin{proof}
Suppose that $\psi=-(-\Delta)^{-1}V\psi\neq0$ in $L^\infty$. We will deduce a contradiction by
$$\Big(1-\frac{\|V_-\|_{\mathcal{K}}}{4\pi}\Big)\|\nabla f\|_{L^2}\leq \la H f, f\ra_{L^2}$$
(see Lemma 2.1 of \cite{DFVV}). Pick a smooth cut-off $\chi$ such that  $\chi=1$ if $|x|\leq 1$ and $\chi=0$ if $|x|\geq 2$, and let $\chi_R:=\chi(\frac{\cdot}{R})$. Plugging $\psi\chi_R$ into the above inequality, we write
$$\|\nabla (\psi\chi_R)\|_{L^2}^2\lesssim\la H(\psi\chi_R), \psi\chi_R\ra_{L^2}=\la (H\psi)\chi_R-2\nabla\psi\cdot\nabla\chi_R-\psi\Delta\chi_R, \psi\chi_R\ra_{L^2}.$$
For $\epsilon>0$, choose $\psi_\epsilon\in C_c^\infty$ such that $\|\psi\chi_R^2-\psi_\epsilon\|_{L^{3,1}}<\epsilon$. Then, we have
\begin{align*}
\la H\psi, \psi_\epsilon\ra_{L^2}&=\la\psi, (-\Delta+V)\psi_\epsilon\ra_{L^2}=\la\psi, (I+V(-\Delta)^{-1})(-\Delta \psi_\epsilon)\ra_{L^2}\\
&=\la(I+(-\Delta)^{-1}V)\psi,(-\Delta \psi_\epsilon)\ra_{L^2}=0,\\
|\la H\psi, \psi\chi_R^2-\psi_\epsilon\ra_{L^2}|&\leq\epsilon\|H\psi\|_{L^{3/2,\infty}}\leq\epsilon\|\Delta\psi\|_{L^{3/2,\infty}}+\epsilon\|V\psi\|_{L^{3/2,\infty}}\leq2=\|V\psi\|_{L^{3/2,\infty}}\lesssim\epsilon.
\end{align*}
Since $\epsilon>0$ is arbitrary, this proves $\la H\psi, \psi\chi_R^2\ra_{L^2}=0$. Therefore,
\begin{align*}
\|\nabla (\psi\chi_R)\|_{L^2}^2&\lesssim\la -2\nabla\psi\cdot\nabla\chi_R-\psi\Delta\chi_R, \psi\chi_R\ra_{L^2}\\
&\leq \int_{\mathbb{R}^3}|\psi|^2 |\nabla\cdot((\nabla\chi_R)\chi_R)|dx+\int_{\mathbb{R}^3}|\psi|^2 |\Delta\chi_R||\chi_R| dx\lesssim R^{-1/2}\|\psi\|_{L^\infty}^2.
\end{align*}
Sending $R\to+\infty$, we conclude that $\psi\equiv0$ (contradiction!). 
\end{proof}


\begin{thebibliography}{00}

\bibitem{A} S. Agmon, Spectral properties of Schr\"odinger operators and scattering theory. Ann. Scuola Norm. Sup. Pisa Cl. Sci. (4) 2 (1975), no. 2, 151-218. 

\bibitem{BCD} V. Banica, R. Carles and T. Duyckaerts, Minimal blow-up solutions to the mass-critical inhomogeneous NLS equation. Comm. Partial Differential Equations 36 (2011), no. 3, 487-531.

\bibitem{B} M. Beceanu, Structure of wave operators in $\mathbb{R}^3$, arxiv.org/abs/1101.0502.

\bibitem{BG} M. Beceanu and M. Goldberg, Schr\"odinger dispersive estimates for a scaling-critical class of potentials, Comm. Math. Phys. 314 (2012), no. 2, 471-481.

\bibitem{Bo} J. Bourgain, Refinements of Strichartz' inequality and applications to 2D-NLS with critical nonlinearity. Internat. Math. Res. Notices 1998, no. 5, 253-283.

\bibitem{C} T. Cazenave, Semilinear Schr\"odinger equations. Courant Lecture Notes in Mathematics, 10. New York University, Courant Institute of Mathematical Sciences, New York; American Mathematical Society, Providence, RI, 2003. xiv+323 pp. 

\bibitem{CCL} J. Colliander, M. Czubak and J. Lee, Interaction Morawetz estimate for the magnetic Schršdinger equation and applications, arxiv.org/abs/1112.4452.

\bibitem{CKSTT1} J. Colliander, M. Keel, G. Staffilani, H. Takaoka and T. Tao, Almost conservation laws and global rough solutions to a nonlinear Schr\"odinger equation. Math. Res. Lett. 9 (2002), no. 5-6, 659-682. 

\bibitem{CKSTT2} J. Colliander, M. Keel, G. Staffilani, H. Takaoka and T. Tao, Global existence and scattering for rough solutions of a nonlinear Schr\"odinger equation on $\mathbb{R}^3$. Comm. Pure Appl. Math. 57 (2004), no. 8, 987-1014.

\bibitem{DFVV} P. D'Ancona, L. Fanelli, L. Vega and N. Visciglia, Endpoint Strichartz estimates for the magnetic Schr\"odinger equation. J. Funct. Anal. 258 (2010), no. 10, 3227-3240.

\bibitem{FV} L. Fanelli and L. Vega, Magnetic virial identities, weak dispersion and Strichartz inequalities. Math. Ann. 344 (2009), no. 2, 249-278.

\bibitem{GVV} M. Goldberg, L. Vega and N. Visciglia, Counterexamples of Strichartz inequalities for Schr\"odinger equations with repulsive potentials. Int. Math. Res. Not. 2006, Art. ID 13927, 16 pp.

\bibitem{H1} Y. Hong, Spectral multiplier theorem associated with Schr\"odinger operators, arxiv.org/abs/1210.6326.

\bibitem{H2} Y. Hong, Local-in-time well-posedness for nonlinear Schr\"odinger equations with potentials, expository.

\bibitem{I} T. Ikebe, Eigenfunction expansions associated with the Schroedinger operators and their applications to scattering theory. Arch. Rational Mech. Anal. 5 1960 1-34

\bibitem{KT} M. Keel and T. Tao, Endpoint Strichartz estimates. Amer. J. Math. 120 (1998), no. 5, 955-980. 

\bibitem{RS} M. Reed and B. Simon, Methods of modern mathematical physics. I. Functional analysis. Second edition. Academic Press, Inc, New York, 1980. xv+400 pp.

\end{thebibliography}
\end{document}